\numberwithin{equation}{section}
\newtheorem{theorem}{Theorem}[section]
\newtheorem{lemma}{Lemma}[section]
\newtheorem{proposition}{Proposition}[section]
\newtheorem{definition}{Definition}
\newcommand{\suptwo}[2]{\sup_{\substack{#1 \\ #2}}} 
\newcommand{\abs}[1]{\left| #1\right|}
\newcommand{\calA}{\mathcal{A}}
\newcommand{\calB}{\mathcal{B}}
\newcommand{\calF}{\mathcal{F}}
\newcommand{\calG}{\mathcal{G}}
\newcommand{\calI}{\mathcal{I}}
\newcommand{\calS}{\mathcal{S}}
\newcommand{\calT}{\mathcal{T}}
\newcommand{\calZ}{\mathcal{Z}}
\newcommand{\bbA}{\mathbb{A}}
\newcommand{\bbE}{\mathbb{E}}
\newcommand{\bbN}{\mathbb{N}}
\newcommand{\bbP}{\mathbb{P}}
\newcommand{\bbR}{\mathbb{R}}
\newcommand{\bbZ}{\mathbb{Z}}
\newcommand{\sfc}{\mathsf c}
\newcommand{\sfe}{\mathsf e}
\newcommand{\sfi}{\mathsf i}
\newcommand{\sfC}{\mathsf{C}}
\newcommand{\uc}{\underline{c}}
\newcommand{\uh}{\underline{h}}
\newcommand{\uv}{\underline{v}}
\newcommand{\uu}{\underline{u}}
\newcommand{\ux}{\underline{x}}
\newcommand{\uy}{\underline{y}}
\newcommand{\uz}{\underline{z}}
\newcommand{\uw}{\underline{w}}
\newcommand{\ueta}{\underline{\eta}}
\newcommand{\unu}{\underline{\nu}}
\newcommand{\urho}{\underline{\rho}}
\newcommand{\ukappa}{\underline{\kappa}}
\newcommand{\uxi}{\underline{\xi}}
\newcommand{\uzeta}{\underline{\zeta}}
\newcommand{\uchi}{\underline{\chi}}
\newcommand{\uX}{\underline{X}}
\newcommand{\uY}{\underline{Y}}
\newcommand{\lb}{\left(}
\newcommand{\rb}{\right)}
\newcommand{\lbr}{\left\{}
\newcommand{\rbr}{\right\}}
\newcommand{\dd}{{\rm d}}
\newcommand{\step}[1]{S{\small TEP}\,#1.}
\newcommand{\1}{\mathbbm{1}}
\newcommand{\be}[1]{\begin{equation}\label{#1}}
\newcommand{\ee}{\end{equation}}
\newcommand{\ep}{\varepsilon}
\newcommand{\fkg}{\stackrel{\mathsf{FKG}}{\prec}}
\definecolor{darkblue}{rgb}{0,0.3,0.9}
\begin{document}
\date{\today} 

\title[Polymers under Geometric Area Tilts ]
{Tightness and Line Ensembles  for  Brownian Polymers under Geometric Area Tilts}

\author{Pietro Caputo}
\address{Dipartimento di Matematica e Fisica, Roma Tre University, Rome, Italy}
\email{caputo@mat.uniroma3.it}

\author{Dmitry Ioffe}
\address{Faculty of IE\&M, Technion, Haifa 32000, Israel}
\email{ieioffe@ie.technion.ac.il}
\thanks{DI was supported by the Israeli Science Foundation grants 
1723/14 and 765/18.}

\author{Vitali Wachtel}
\address{Institut f\"ur Mathematik, Universit\"at Augsburg, D-86135 Augsburg, 
Germany}
\email{vitali.wachtel@math.uni-augsburg.de}
\thanks{VW was supported by the Humboldt Foundation.}

\begin{abstract}
We prove tightness and limiting Brownian-Gibbs 
description for line ensembles  of non-colliding Brownian bridges  above a hard wall, 
which 
are subject to geometrically growing self-potentials of tilted area type. Statistical properties of the resulting ensemble are very different from that of non-colliding Brownian bridges without self-potentials. 
The model itself was  introduced in order to mimic level lines of $2+1$  discrete Solid-On-Solid 
 random interfaces above a hard wall. 
\end{abstract}

\maketitle
\centerline{{\em We dedicate this paper to Anton Bovier  on the occasion of his 60th birthday}}
\section{Introduction, Notation and Results 
} 

\subsection{\bf Level lines of SOS surfaces and related ensembles of non-intersecting paths} 
{
	In this paper we investigate large scale behavior of ensembles of
	non-intersecting Brownian polymers under geometric area tilts. 
	Our main motivation comes from a desire to understand and model 
	limiting properties of microscopic large level lines of  the 
	$2+1$ SOS (solid-on-solid) random interfaces conditioned to stay above 
	a flat wall. We refer to \cite{CIW18} for a careful exposition of this connection with references to earlier works.
} 

{
	Models of a (single) Brownian polymer
	constrained to stay above a barrier
	were introduced in \cite{ferrarispohn2005}. By Girsanov's transform an equivalent formulation is in terms of positive Brownian bridges under area tilts~\cite{pascalzeitouni}. These models are  
	of an independent interest and, in particular, they have a rich limiting variational and fluctuation structure. We would like to mention recent 
	works \cite{smith2018geometrical,dobrokhotov2018asymptotics,meerson2019geometrical} 
	and references therein. 
} 

{
	Gibbsian structure of (single $\sfC (\bbR , \bbR)$-valued)  Brownian polymers with self-potentials and self-interactions was introduced and explored in \cite{osada1999gibbs,lHorinczi2001gibbs}. Space-time Gibbsian states on ${\sfC (\bbR , \bbR)}^{\bbZ^d}$ were introduced and constructed in \cite{minlos2000gibbs,dai2002existence} in the weak interaction case, both as Gibbs states and as  systems  
	of infinite dimensional interacting diffusions. Hard core interactions between ordered paths in one spatial dimension 
	do not fall into this framework. Variants of infinite volume dynamics 
	whose equilibrium states are determinantal point fields were constructed in \cite{spohn1987interacting,osada2004non}, see 
	also various refinements with references to the latter works.
} 

{
	Recent developments which are closely related to the model we consider here are \cite{Bornemann,duits2018}. However, 
	since geometrically growing area  tilts preclude using Karlin-McGregor formula, the determinantal structure is apparently lost, and one has to deal with a rather different situation. For instance no rescaling as the number  of paths grow is needed. Yet, 
	the Brownian-Gibbsian framework introduced in 
	\cite{corwinhammond,corwin2016kpz} fits in extremely well, and we shall {rely} on the philosophy and the ideas which were developed in these remarkable papers.  
}
\subsection{\bf Organization  of the paper.}
{
	The Brownian 
	notation and the models of Brownian polymers under geometric area tilts are introduced in Subsections~\ref{sec:BM-BP}-\ref{sub:pmeas}. The main confinement
	results from \cite{CIW18} which enables a uniform control over one-point distributions of the top path is recalled in 
	Subsection~\ref{sec:conf}. Our main results here which are formulated in Subsection~\ref{sub:Results} yield full tightness of the ensembles we consider, both with free and zero boundary conditions, and, furthermore, we establish Brownian-Gibbs property of limiting infinite dimensional ensembles.  Whether there is a unique such limiting ensemble or not remains an open question, although monotonicity arguments imply unicity of the limit for
	polymer measures with zero boundary conditions. The proofs of
	full tightness appear in Section~\ref{sec:scheme}. Limiting line
	ensembles are discussed  in  Section~\ref{sec:line-ens}. {Finally, future research directions and some  open problems are outlined in the concluding Subsection~\ref{sub:open}.   
	}

\subsection{\bf 
	Brownian motion and Brownian bridges.}  
\label{sec:BM-BP}
In the sequel we shall use the same notation for path measures of underlying Brownian motion and 
Brownian bridges and for expectations with respect to these
path measures. 
For  $\ell < r$ and $x\in\bbR$, 
let ${\mathbf P}^x_{\ell , r} $ be the  path measure of the Brownian 
motion $X$ on $[\ell , r]$ which starts at $x$ at time $\ell$; $X (\ell ) = x$. 
We can record ${\mathbf P}^x_{\ell , r} $ as follows: 
\be{eq:BM-BB} 
{\mathbf P}^x_{\ell , r} \lb F (X ) \rb = \int {\mathbf B}^{x, y}_{\ell , r} \lb F ( X )\rb \dd y
\ee
where ${\mathbf B}^{x,y}_{\ell , r} $ the  {\em unnormalized} path measure of the Brownian 
bridge  $X$ on $[\ell , r]$ which starts at $x$ at time $\ell $ and ends at 
$y$ at time $r$; $X (\ell  ) = x,\, X (r )= y$.  
%
{
In this way {the total mass of ${\mathbf B}^{x,y}_{\ell , r}$ is given by}
\be{eq:q-xy} 
{
q_{r-\ell} (x ,y ) := 	
}
{\mathbf B}^{x,y}_{\ell , r}  (1 ) = \tfrac1{\sqrt{2\pi(r-\ell )}}\,{\rm e}^{- \frac{(y-x)^2}{2(r-\ell )}}. 
\ee
}
{ 
The corresponding normalized Brownian bridge probabilities will be denoted
as ${\bf \Gamma}^{x,y}_{\ell , r}$, that is ${\bf \Gamma}^{x,y}_{\ell , r} (\cdot )= 
{\mathbf B}^{x,y}_{\ell , r}(\cdot )/q_{r-\ell} (x ,y )$.  
}
\smallskip 

For an 
$n$-tuple {$\ux=(x_1,x_2,\ldots,x_n)\in\bbR^n$}, set 
\[
{\mathbf P}^{\ux}_{n; \ell , r} 
= 
{\mathbf P}^{x_1}_{\ell , r} \otimes {\mathbf P}^{x_2}_{\ell , r} \otimes \cdots \otimes 
{\mathbf P}^{x_n}_{\ell , r} .
\]
Similarly for 
$n$-tuples $\ux, \uy \in\bbR^n$, set 
\be{eq:q-uxuy01}
{\mathbf B}^{\ux , \uy}_{n; \ell , r} 
=
{\mathbf B}^{x_1, y_1 }_{\ell , r} \otimes {\mathbf B}^{x_2 , y_2}_{\ell , r} 
\otimes \cdots \otimes 
{\mathbf B}^{x_n , y_n }_{\ell , r}, 
\ee
and   
\be{eq:q-uxuy02}
q_{n,r-\ell}(\ux , \uy) = \prod_{i=1}^{n} 
q_{n,r-\ell} (x_i , y_i )\,,\qquad 
{\bf \Gamma}^{\ux , \uy}_{n,\ell , r} = \frac{{\mathbf B}^{\ux , \uy}_{\ell , r}}{q_{n,r-\ell}(\ux , \uy)}	.
\ee

\subsection{\bf Polymer measures  with geometric area tilts.}
\label{sec:geopol}
Given a function $h$, the signed $h$-area under the trajectory of $X$ is defined as
\be{eq:T-area} 
\calA_{\ell , r}^h \lb X\rb = \int_{\ell}^r h (t ) X (t)\dd t .
\ee
We shall drop the superscript if $h\equiv 1$ and use $\calA_{\ell , r} \lb X\rb$ accordingly. 
For  $n\in\bbN$ define
\be{eq:Aplus}
\bbA_n^+ (M) = \{ \ux \in \bbR^n \,:\, M > x_1 >\dots > x_n > 0\} \quad\text{and}\quad \bbA_n^+ = \bbA_n^+ (\infty ). 
\ee
We also define the set
\be{eq:Aplusbar}
\bar\bbA_n^+ = \{ \ux \in \bbR^n \,:\,  x_1 \geq \dots \geq x_n \geq  0\}. 
\ee
Polymer measures which we consider in the sequel are always  
concentrated on the set $\Omega^{+}_{n;  \ell, r}$ of  $n$-tuples  $\uX$, 
\be{eq:Omega-Set} 
\Omega^{+}_{n; \ell , r} = \lbr \uX~:~ \uX (t )\in \bbA_n^+\ \ \forall\, t\in (\ell , r)\rbr.
\ee
Given 
 $n\geq 1$,  $a>0,\lambda >1$ and $\ux,\uy\in\bbA_n^+$, consider the
partition 
function
\be{eq:PF-BC-T01} 
 Z_{n; \ell , r}^{ \ux , \uy } (a , \lambda ) 
:=  {\mathbf B}^{\ux , \uy}_{\ell , r} 
\lb 
\1_{\Omega_{n; \ell , r}^+}
{\rm e}^{-\sum_1^n a\lambda^{i-1}\calA_{\ell , r} (X_i )}
\rb, 
\ee
and the associated probability measure $\bbP^{\ux,\uy}_{n; \ell , r}\left[ \cdot  ~|a, \lambda  \right]$ defined by
\be{eq:PF-BC-T02} 
\bbP^{\ux,\uy}_{n; \ell , r}
\left[ F (\uX ) ~|a, \lambda  \right]  
:= \frac{1}{Z_{n; \ell , r}^{ \ux , \uy } (a , \lambda ) } \,
{\mathbf B}^{\ux , \uy}_{\ell , r} 
\lb F (\uX )
\1_{{\Omega_{n; \ell, r }^+}}
{\rm e}^{-\sum_1^n a\lambda^{i-1}\calA_{\ell , r} (X_i )} 
\rb ,
\ee
where $F$ is any bounded measurable function over the set of $n$-tuples of continuous functions from $[\ell,r]$ to $\bbR$. 
The measure $\bbP^{\ux,\uy}_{n; \ell , r}\left[ \cdot  ~|a, \lambda  \right]$ will be referred to as the $n$-{\em polymer measure with 
 $(a,\lambda)$-geometric area tilts with boundary conditions $(\ux,\uy)$ on the interval} $[\ell,r]$.  

We remark that $\bbP^{\ux,\uy}_{n; \ell , r}\left[ \cdot  ~|a, \lambda  \right]$ is well defined for all $\ux,\uy\in\bbA_n^+$. Indeed, if $\ux,\uy\in\bbA_n^+$
one has 
\be{eq:PF-BC-T03} 
 {\mathbf B}^{\ux , \uy}_{\ell , r} 
\lb 
{\Omega_{n; \ell , r}^+}
\rb >0 , 
\ee
 and therefore $Z_{n; \ell , r}^{ \ux , \uy } (a , \lambda )\in (0,\infty)$. While this does not apply to all $\ux,\uy\in\bar \bbA_n^+$, it is still possible to define $\bbP^{\ux,\uy}_{n; \ell , r}\left[ \cdot  ~|a, \lambda  \right]$ in these cases by a limiting procedure, see e.g.\ \cite[Definition 2.13]{corwinhammond} for the case $a=0$. In particular, we shall often deal with the case of {\em zero boundary conditions} $$\bbP^0_{n; \ell , r}\left[ \cdot  ~|a, \lambda  \right]:=\bbP^{\underline 0,\underline 0}_{n; \ell , r}\left[ \cdot  ~|a, \lambda  \right].$$
It is also natural to consider the polymer measure with {\em free boundary conditions} defined by  
\be{eq:PolMeas02} 
\bbP_{n; \ell , r}
\left[ F (\uX ) ~|a, \lambda  \right]  
:= \frac{1}{{\mathcal Z}_{n ;\ell , r} (a, \lambda )} 
\int_{\bbA_{n}^+} 
\int_{\bbA_{n}^+}
{\mathbf B}^{\ux , \uy}_{\ell , r} 
\lb F (\uX )
\1_{\Omega_{n, T }^+}
{\rm e}^{-\sum_1^n a\lambda^{i-1}\calA_{\ell , r} (X_i )} 
\rb 
\dd \ux \dd\uy 
\ee
where 
\be{eq:freepartfct}
{\mathcal Z}_{n ;\ell , r} (a, \lambda )
:= 
\int_{\bbA_{n}^+} 
\int_{\bbA_{n}^+}
Z_{n ;\ell , r}^{ \ux , \uy}  (a, \lambda)  
\dd\ux 
\dd 
\uy ,
\ee
and $\dd\ux, \dd 
\uy$ denote Lebesgue measures on $\bbR^n$.
For a proof that $\bbP_{n; \ell , r}
\left[ \cdot~|a, \lambda  \right] $ is well defined, that is ${\mathcal Z}_{n ;\ell , r} (a, \lambda )\in(0,\infty)$, for all $a>0,\lambda>1$, see \cite[Appendix~A]{CIW18}.

\subsection{\bf A general class of polymers with area tilts.}
\label{sub:pmeas}
Let us say that two 
functions $f$ and $g$ on ${I\subset \bbR}$ satisfy $f\prec g$ if $ f (t )\leq g (t )$ 
for any $t\in I$. By construction, 
if $\uX\in \Omega_{n ;\ell , r }^+$, then $0\prec X_n\prec  X_{n-1} \prec \dots \prec X_1$ {on $[l,r]$}. For every $n\in \bbN$ 
and $\ell < r$, consider the following 
general class $\bbP_{n ;\ell , r}^{\ux , \uy}\left[\, \cdot | h_- , h_+, \urho\right]$ of
polymer measures which is parametrized by: 
\begin{description} 
 \item[a] Boundary conditions 
 $\ux, \uy \in \bbA_n^+$. 
 \item[b] A pair $\uh=(h_- , h_+)$ of non-negative continuous functions, called respectively  
 the floor and the ceiling, satisfying $h_-\prec h_+$ on $[ \ell , r]$. 
 \item[c] An $n$-tuple of (not necessarily ordered) nonnegative 
 continuous functions  $\urho = \lbr \rho_1, \dots , \rho_n\rbr$, 
 called the area tilts. 
\end{description}
Then, setting 
 \be{eq:Om-h} 
\Omega_{n ;\ell , r }^{\uh} = \Omega_{n; \ell , r }^+
{\cap \{  h_- \prec X_n\} \cap \{ X_1 \prec h_+\}}, 
\ee 
define:
 \be{eq:mu-meas} 
 \bbP_{n ;\ell , r}^{\ux , \uy}\left[\dd\uX  | h_- , h_+, \urho\right] \,
 \propto \, 
 {\rm e}^{- \sum_1^n \calA_{
 {\ell , r}}^{\rho_i} (X_i )} \1_{\Omega_{n ;\ell , r }^{\uh}}
 {\mathbf B}_{\ell , r}^{\ux , \uy }\lb \dd \uX \rb .
 \ee
The corresponding partition function is denoted 
$Z_{n ;\ell , r}^{\ux , \uy}( h_- , h_+, \urho )$. Clearly, the polymer measure $\bbP_{n ;\ell , r}^{\ux , \uy}\left[\cdot  | h_- , h_+, \urho\right]$ coincides with the $\bbP^{\ux,\uy}_{n; \ell , r}\left[ \cdot  ~|a, \lambda  \right]$  defined in \eqref{eq:PF-BC-T02}  in the case of geometric {tilts} $\rho_i\equiv a\lambda ^{i-1}$ and trivial floor and ceilings $(h_-,h_+)\equiv (0,+\infty)$.
Similarly, we shall employ the following notation for 
	Brownian bridge measures conditioned to $\Omega_{n; \ell ,r }^{\uh}$,  
	\be{eq:B-y-meas} 
	{\mathbf B}_{n; \ell , r}^{\ux , \uy } \left[  \, \cdot\, \big|\, 
	h_- , h_+
	\right] 
	:= 
	{{\bf \Gamma}}_{n; \ell , r}^{\ux , \uy } \left[  \, \cdot\, \big|\, 
	h_- , h_+
	\right] 
	:= 
	{{\bf \Gamma}}_{n; \ell , r}^{\ux , \uy } \lb \, \cdot\, \big|\, \Omega_{n; \ell ,r }^{\uh} \rb .
	\ee 
{
There is a straightforward extension of  \eqref{eq:Omega-Set}
and \eqref{eq:Om-h} to sets of paths over more general subsets $I\subset \bbR$ (in the sequel we shall need to work with  finite union of intervals). Namely, 
\be{eq:Omega-Set-g} 
\Omega^{+}_{n; I} = \lbr \uX~:~ \uX (t )\in \bbA_n^+\ \ \forall\, t\in I\rbr
\ee	
and 
\be{eq:Om-h-g} 
\Omega_{n ; I  }^{\uh} = \Omega_{n; I }^+\cap
{\{  h_- \prec X_n\} \cap \{ X_1 \prec h_+\}}
\ee 
}

Moreover, there is a  version of \eqref{eq:mu-meas} which permits  more general 
boundary conditions: Let $\unu$ and $\ueta$ be $n$-tuples of 
 functions on $\bbR_+$. 
For $\ux\in \bbA_n^+$ set $\unu (\ux ) =\sum_1^n \nu_i (x_i )$, and $\ueta (\ux ) =\sum_1^n \eta_i (x_i )$. 
Similarly, set $\calA_{
{\ell , r}
}^{\urho} (\uX ) = \sum_1^n \calA_{
{\ell , r}}^{\rho_i} (X_i )$. 
Then, 
\be{eq:mu-meas-bc} 
 \bbP_{n;  \ell , r}^{\unu , \ueta}\left[\dd\uX  | h_- , h_+, \urho\right] \,
 \propto \, 
 \int_{\bbA_n^+} \int_{\bbA_n^+} {\rm e}^{- \calA_T^{\urho} (\uX )} \1_{\Omega_{n; \ell ,r }^{\uh}}\lb \uX\rb 
 {\rm e}^{-\unu (\ux )}{\mathbf B}_{\ell , r}^{\ux , \uy }\lb \dd \uX \rb{\rm e}^{-\ueta (\uy )}\dd\ux \dd \uy .
 \ee
The corresponding partition function is denoted 
$\calZ_{n; \ell , r}^{\ueta , \unu}( h_- , h_+, \urho )$. 

Below we shall tacitly assume that boundary conditions $\unu , \ueta$ are chosen in such a 
way that the corresponding polymer measures are well defined, this is 
justified in  all the relevant cases 
in  Appendix~A of \cite{CIW18}.

\bigskip

\noindent
{\em Reduced notation.}
We shall, unless this creates a confusion, employ the following reduced
notation:
If $\ueta,\unu$ are identically zero, 
we shall drop them  from the notation. We refer to this as the case of free (or empty) boundary conditions.  Similarly we shall drop from the notation the floor $h_-$ 
whenever $h_- \equiv 0$ and the ceiling $h_+$ whenever $h_+\equiv \infty$. 

In the case of pure geometric tilts we shall write $a, \lambda$ instead of $\urho$ whenever 
\be{eq:GAT}
\urho = \lbr a, a\lambda , \dots ,a\lambda^{n-1}, \dots \rbr
\ee 
and, furthermore,  wee shall drop $a, \lambda$ from the notation whenever this creates no confusion. 

{
	In the sequel we shall drop sub-index $n$ unless we would like  to stress the number of polymers in a stack, and  whenever this will
	cause no confusion.
}

In the case of symmetric time intervals  we shall write 
\[
\Omega^{+}_{n, T} =\Omega^{+}_{n; -T, T},\quad  
Z_{n, T}^{ \ux , \uy}   =   
Z_{n; -T  , T}^{ \ux , \uy}  , \quad 
{\mathbf B}_{n, T}^{ \ux , \uy}   =   
{\mathbf B}_{n; -T  , T}^{ \ux , \uy} , \quad 
 \bbP^{\ux,\uy}_{n, T} = \bbP^{\ux,\uy}_{n; -T, T},
\] 
and so on.

\subsection{\bf 
	A confinement statement and uniform control of curved maxima.} 
\label{sec:conf}
The main result of  \cite{CIW18} could be formulated as follows:  
\begin{theorem} 
\label{thm:main} 
For any fixed $a>0$, $\lambda >1$ and $\chi >0$
 the family of (one-dimensional)  
 distributions  of the height  $X_1 (0 )$  of the top path at the origin under the free boundary condition field 
 $\lbr \bbP_{n ;\ell , r}\left[ \cdot  ~|a, \lambda  \right]\rbr_{n\in\bbN , \ell\leq -\chi  , r\geq \chi }$ 
 is tight. In other words the top path does not fly away as the number of
 polymers and the length of their horizontal span grow. Moreover, the same statement holds for the zero-boundary condition field $\bbP^0_{n; \ell , r}\left[ \cdot  ~|a, \lambda  \right]$.
\end{theorem}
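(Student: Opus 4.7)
Since $X_1(0)\geq 0$ by the hard wall and the ordering, only the upper tail has to be controlled; the goal is to show that
$\bbP_{n;\ell,r}[X_1(0)>M\mid a,\lambda]\longrightarrow 0$ as $M\to\infty$, uniformly in $n\in\bbN$ and in $[\ell,r]\supseteq[-\chi,\chi]$, and similarly for $\bbP^0_{n;\ell,r}$. The strategy relies on three ingredients: FKG--type monotonicity to prune the parameter space, a quantitative pinning of the subordinate paths $X_k$, $k\geq 2$, coming from the geometrically large tilts $a\lambda^{k-1}$, and a single--path tail estimate for a Brownian bridge with linear area tilt above a constant floor.

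\emph{Step 1 (FKG reductions).} The polymer measures are FKG--monotone in boundary data, floors, and ceilings; this is a classical fact for non--intersecting bridges with area tilts and is used heavily in \cite{corwinhammond}. By sandwiching, the free--boundary law $\bbP_{n;\ell,r}$ on an interval containing $[-\chi,\chi]$ can be compared to $\bbP^{\ux,\uy}_{n;\ell,r}$ with appropriate deterministic boundary data; similarly $\bbP^0_{n;\ell,r}$ is stochastically controlled from above by laws with small constant boundary data, and the law of $X_1$ is monotone in $n$, so one may pass to the monotone limit as $n\to\infty$. This reduces matters to a single uniform upper--tail estimate on $X_1(0)$ under a family of zero (or effectively zero) boundary condition measures, with $n$ and the interval varying freely.

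\emph{Step 2 (Pinning of subordinate paths).} Since $X_k$ carries tilt $a\lambda^{k-1}$, one expects $X_k$ to be pinned close to the wall. Quantitatively, removing the ordering constraint between $X_k$ and $X_{k+1},\dots,X_n$ can only decrease $X_k$ by FKG; hence $X_k$ on $[-\chi,\chi]$ is stochastically dominated by a positive Brownian bridge subject to the single tilt $a\lambda^{k-1}$, for which a first--moment bound on $\calA_{-\chi,\chi}(X_k)$ together with a Markov argument gives a uniform pointwise height bound $X_k\leq C(\lambda^{k-1})^{-1/3}$ on $[-\chi,\chi]$ with high probability (the Ferrari--Spohn scaling for a single area--tilted positive bridge). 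In particular $X_2$ is confined to $[0,L]$ on $[-\chi,\chi]$ with probability arbitrarily close to one, uniformly in $n$ and in the interval.

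\emph{Step 3 (Single--path reduction and main obstacle).} Conditionally on $X_2$, the marginal law of $X_1$ on $[\ell,r]$ is a Brownian bridge with tilt $a$, floor $X_2$, and at most the ceiling inherited from boundary conditions. By FKG in the floor, on the event $\{X_2\prec L\}$ the conditional law of $X_1$ is stochastically dominated by the corresponding bridge above the constant floor $L$, for which uniform tightness of the one--point marginal at the origin is classical. The main difficulty is the clean execution of Step 2: one must telescope the stochastic--domination arguments through the stack without losing the exponential gain $\lambda^{-(k-1)}$ at each level, and simultaneously control the partition--function ratios that appear when integrating out successive levels; the careful cascade that achieves this is the main content of \cite{CIW18}.
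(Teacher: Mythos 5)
The paper's own proof is a one--line reduction: since $X_1(0)\leq \xi^{\ell,r}_\alpha(X_1)=\max_t\big(X_1(t)-|t|^\alpha\big)_+$, Markov's inequality applied to the uniform curved--maximum bound of Theorem~\ref{thm:max-control} (quoted from \cite{CIW18}) gives $\bbP_{n;\ell,r}[X_1(0)>M]\leq \xi^*_\alpha/M$, uniformly in $n$, $\ell\leq -\chi$, $r\geq\chi$, and the zero--boundary case follows by stochastic domination (Lemma~\ref{lem:SD}). Your proposal tries instead to sketch the underlying argument of \cite{CIW18} directly, but two points do not hold up.

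First, the stochastic--domination direction in Step~2 is reversed. Removing the ordering constraints with the paths $X_{k+1},\dots,X_n$ replaces a nonnegative random floor by the floor $0$; by Lemma~\ref{lem:SD-CH} this \emph{lowers} $X_k$, so the resulting single bridge is a stochastic \emph{lower} bound, not an upper bound. To get an upper bound on $X_k$ one must do the opposite: discard the paths $X_1,\dots,X_{k-1}$ \emph{above} $X_k$ (i.e.\ raise the ceiling to $+\infty$), after which $X_k$ is dominated by the \emph{top} path of an $(n-k+1)$--path stack with tilts $a\lambda^{k-1},\dots,a\lambda^{n-1}$; Brownian scaling then reduces this to the original top--path problem with the $\lambda^{-(k-1)/3}$ prefactor (this is exactly Lemma~\ref{lem:contr_max}(2) in the paper). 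Removing \emph{all} other paths, as your wording suggests, lowers the floor and raises the ceiling simultaneously and produces no monotone comparison at all.

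Second, Step~3 has a gap that the curved--maximum device is precisely designed to close. You condition on $\{X_2\prec L\}$ on $[-\chi,\chi]$ and then dominate $X_1$ by a single tilted bridge over the constant floor $L$. But the FKG comparison requires the floor to be dominated over the \emph{entire} interval $[\ell,r]$, not just on a compact window, and as $\ell\to-\infty$, $r\to\infty$ the floor $X_2$ cannot be bounded by a constant with probability near one. The resolution in \cite{CIW18} is that $X_2$ \emph{can} be dominated by a curved barrier $\xi+|t|^\alpha$ uniformly in the interval length, and one then uses that a single area--tilted bridge above such a sublinear floor still has uniformly tight one--point marginals. Without the curved barrier your Step~3 does not close, and the recursion you allude to cannot terminate. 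You do acknowledge that ``the careful cascade\dots is the main content of \cite{CIW18}'', but as written the two errors above mean the sketch would not reconstruct that cascade.
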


The statement of Theorem~\ref{thm:main} follows from uniform control of the expectations of a certain 
 curved maxima. Let us describe the latter notion. 
 
Let $\varphi_\alpha  (t ) = \abs{t}^\alpha$ with $\alpha \in (0, \frac{1}{2} )$. 
Given a continuous function $h$ on $[\ell , r]$ define (see Figure~\ref{fig:a}) 
\be{eq:xi-func} 
\xi^{\ell , r}_{\alpha } ( h) = \min\lbr y\geq 0\,  :\, y+\varphi_\alpha  \succ h\rbr = \max_{t\in[\ell ,r]} \left(  h (t ) - \abs{t}^\alpha  \right)_+, 
\ee
where $(\,\cdot\,)_+$ denotes the positive part. 
Informally, $\xi^{\ell , r}_{\alpha } ( h)$ is the minimal amount to lift $\varphi_\alpha $ so that 
it will stay above $h$. We think of $\xi^{\ell , r}_{\alpha } ( h)$ in terms of the curved maximum of $h$ on
$[ \ell , r]$.

\begin{figure}[h]
\begin{overpic}[scale=0.5]{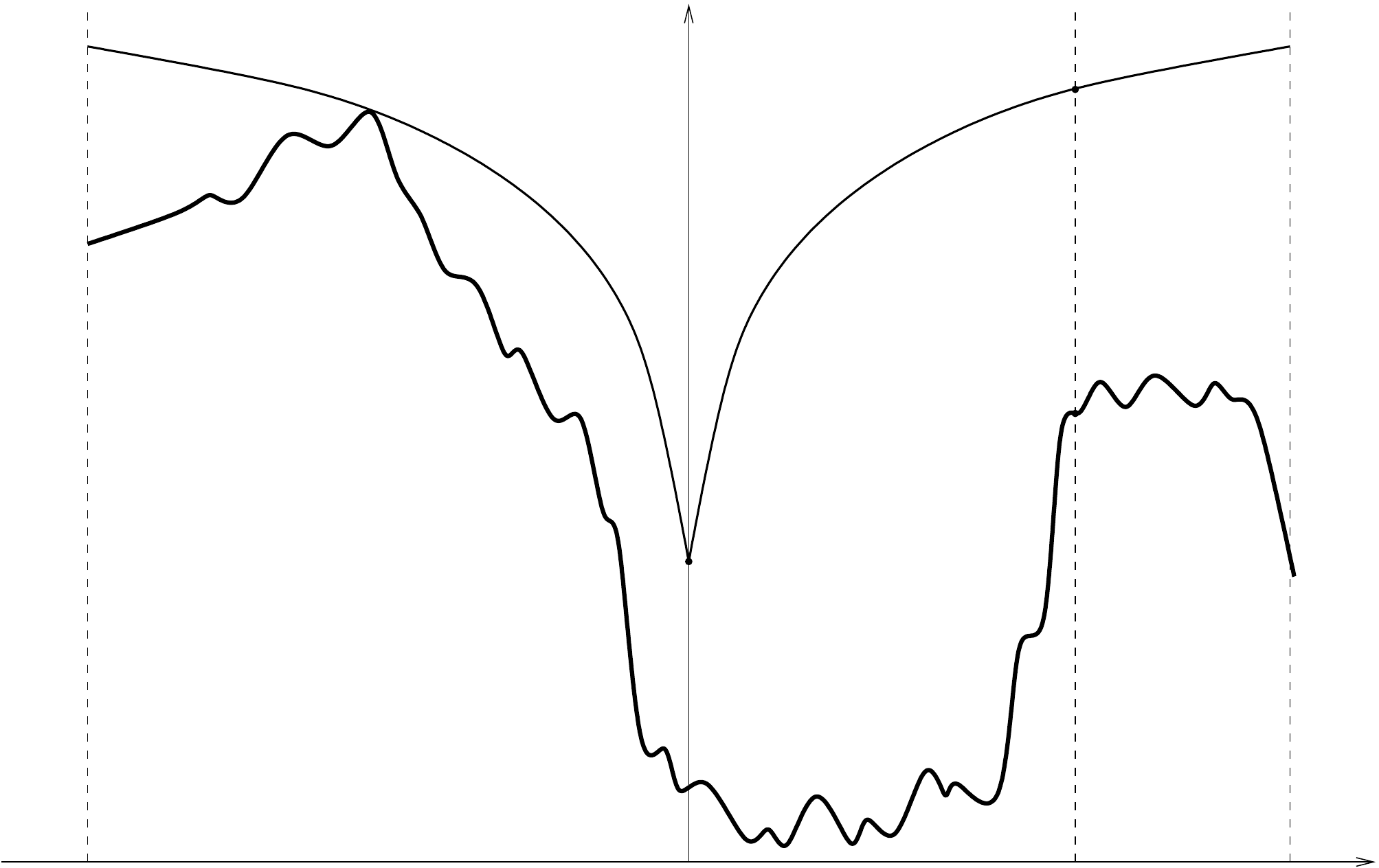}
\put(51,21){\scalebox{.7}{$\xi^{\ell,r}_\alpha(h)$}}
\put(62,57.5){\scalebox{.7}{$\xi^{\ell,r}_\alpha(h)+|t|^\alpha$}}
\put(72,34){\scalebox{.76}{$h(t)$}}
\put(77.5,-3){\scalebox{.8}{$t$}}
\put(92.5,-3){\scalebox{.8}{$r$}}
\put(48.9,-3){\scalebox{.8}{$0$}}
\put(4.9,-3){\scalebox{.8}{$\ell$}}
\end{overpic}
\label{fig:a}
\caption{ The curved maximum $\xi^{\ell,r}_{\alpha}(h)$, in the case where $[\ell,r]$ is a symmetric interval around the origin. 
} 
\vspace{-0.25cm}
\end{figure}

%

Then, Theorem~\ref{thm:main} is an immediate consequence of the following result \cite{CIW18}:
\begin{theorem} 
\label{thm:max-control}
For any $a>0,\lambda>1$, $\chi >0$ and $\alpha\in(0,\tfrac12)$:
\be{eq:max-control}
\begin{split}
\xi^*_\alpha  &:=  \sup_{\ell\leq -\chi} \sup_{ r \geq \chi }\sup_n \bbE_{n; \ell , r} \left[  \xi^{\ell , r}_{\alpha} ( X_1 )\, |\, 
 a, \lambda\right]  \\
&=
\sup_{\ell\leq -\chi} \sup_{ r \geq \chi }\sup_n \bbE_{n; \ell , r} \left[
\max_{t\in[\ell ,r]} \left( X_1 (t ) - \abs{t}^\alpha  \right)_+\, 
\big|\,  a, \lambda\right] 
 <\infty .
 \end{split}
 \ee
\end{theorem}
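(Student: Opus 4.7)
The aim is to show that the top path $X_1$ cannot stay far above the curve $|t|^\alpha$, uniformly in $n$ and in the horizon $[\ell,r]$. I would proceed in three stages: (i) FKG-type monotonicity reductions, (ii) a super-exponential one-point tail bound for $X_1(t)-|t|^\alpha$, and (iii) promotion of the pointwise tail to a bound on $\xi^{\ell,r}_\alpha(X_1)$ via a chaining argument.

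\textit{Stage 1: Monotonicity.} The Hamiltonian $\sum_i a\lambda^{i-1}\calA_{\ell,r}(X_i)$ together with the non-crossing constraint and the hard wall forms an FKG lattice. Standard stochastic domination then shows that $X_1$ is increasing in $n$, in the boundary data $(\ux,\uy)$, and in the horizon, so $\sup_n$ corresponds to the infinite-stack limit. The crucial structural observation is that, conditional on $X_1$, the lower stack $(X_2,\ldots,X_n)$ is an $(n-1)$-path ensemble with tilts $(a\lambda,a\lambda^2,\ldots)$ squeezed beneath the random ceiling $X_1$; removing this ceiling only raises the lower paths (FKG), so the running maximum of $X_2$ is stochastically dominated by the curved maximum of the top path of a geometrically stronger (leading tilt $a\lambda$) problem. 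This yields a self-similar recursion.

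\textit{Stage 2: One-point tail.} The analytic heart is a uniform bound of the form
\[
\bbP_{n;\ell,r}\bigl(X_1(t)\geq y+|t|^\alpha \bigm| a,\lambda\bigr) \leq C\exp(-c\,y^{3}), \qquad y\geq 0,
\]
with constants independent of $n,\ell,r,t$. Heuristically: if $X_1(s)\geq y+|s|^\alpha$ at some $s$, then the Brownian modulus of continuity (transferred to the polymer measure via a Radon--Nikodym comparison with ${\mathbf B}^{\ux,\uy}_{n;\ell,r}$ on events of bounded enclosed area) forces the excursion above the curve $|t|^\alpha$ to last for a time of order $y^2$ and to enclose an area of order $y^3$, so the density $\exp(-a\calA_{\ell,r}(X_1))$ penalizes the event by $e^{-cay^3}$. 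Rigorously, I would bound $\bbP_{n;\ell,r}(X_1(t)\geq y+|t|^\alpha)$ by comparing $Z^{\ux,\uy}_{n;\ell,r}(a,\lambda)$ with the partition function of the same ensemble restricted to such excursions, and use Stage 1 to control how the lower-path free energy responds when $X_1$ is pushed up.

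\textit{Stage 3: From one-point to curved max, and the main obstacle.} Partition $[\ell,r]$ into dyadic pieces $I_k=\{2^k\leq|t|<2^{k+1}\}\cap[\ell,r]$. On each $I_k$, combine the pointwise tail from Stage 2 at the midpoint with a modulus-of-continuity estimate (again via the Radon--Nikodym comparison with Brownian bridge) to obtain $\bbP(\sup_{t\in I_k}(X_1(t)-|t|^\alpha)_+\geq y)\leq C'\exp(-c'y^3)$. A union bound over $k\geq 0$ costs only a logarithmic factor, absorbed by the super-exponential rate, and integration in $y$ yields $\bbE[\xi^{\ell,r}_\alpha(X_1)]<\infty$ uniformly. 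The hardest point is Stage 2 with $n$-independent constants: the non-crossing constraint couples $X_1$ to the random floor $X_2$, so no direct single-path argument applies. The geometric growth $\lambda^{i-1}$ is essential precisely here, as it is what makes the recursion of Stage 1 close and furnishes an effective deterministic control on the floor seen by $X_1$; extracting this carefully from the ensemble is the bulk of the work in \cite{CIW18}.
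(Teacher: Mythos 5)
First, a framing point: the paper under review does not actually prove Theorem~\ref{thm:max-control}; it is imported verbatim from \cite{CIW18}, and only the extension to zero boundary conditions (via stochastic domination) is discussed here. The comparison is therefore with the argument of \cite{CIW18}, whose skeleton is visible in this paper through the citations of \cite[Proposition 2.1]{CIW18} and \cite[Remark 2]{CIW18} in the proof of Lemma~\ref{lem:contr_max}. Your Stage 1 is exactly that mechanism: condition on the top path, note that after removing the random ceiling $X_1$ the stack $(X_2,\dots,X_n)$ is stochastically dominated by the $(n-1)$-ensemble with tilts $(a\lambda,\dots,a\lambda^{n-1})$, and close the recursion by Brownian scaling --- the ensemble with leading tilt $a\lambda$ is the $\lambda^{-1/3}$ spatial shrinking of the one with leading tilt $a$, and the hypothesis $\alpha<\tfrac12$ is what makes $\xi_\alpha$ contract under this shrinking. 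Combined with a single-path estimate of the form $\bbE\lbs \xi_\alpha(X_1)\,|\,X_2=h\rbs\leq \xi_\alpha(h)+c(a)$, this yields $\xi^*_\alpha\leq\sum_k c(a\lambda^k)<\infty$. The one structural difference is order of operations: in \cite{CIW18} the recursion is run first, so the analytic work of your Stages 2--3 only ever has to be carried out for a \emph{single} tilted path above a deterministic floor, never with $n$-uniform constants for the full ensemble; your plan of proving an $n$-uniform one-point tail directly and then chaining is workable but strictly harder, and your closing paragraph essentially concedes it would be routed through the Stage-1 recursion anyway.

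The one genuine error is the claimed tail $\bbP_{n;\ell,r}\lb X_1(t)\geq y+|t|^\alpha\rb\leq C\exp(-c\,y^{3})$ together with the heuristic behind it. The diffusive picture (an excursion of duration $\asymp y^2$ enclosing area $\asymp y^3$) is not the optimal deviation strategy: the cheapest way for the path to reach height $y$ is a near-ballistic spike of duration $s\asymp (y/a)^{1/2}$, whose Gaussian cost $\asymp y^2/s$ and area cost $\asymp ays$ are both of order $a^{1/2}y^{3/2}$. The true one-point tail is therefore of order $\exp(-c\,y^{3/2})$ (Ferrari--Spohn/Airy decay), which is much larger than $\exp(-c\,y^{3})$, so the bound you propose to prove in Stage 2 is false and no argument can establish it. This does not sink the strategy --- any stretched-exponential tail survives the union bound over dyadic blocks and integrates in $y$ --- but the exponent should be corrected to $3/2$ and the supporting heuristic replaced by the ballistic optimization.
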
 
The fact that Theorem \ref{thm:max-control}
also holds for the zero boundary condition field is a consequence of stochastic domination, see  Section \ref{sub:SD}  below. 
\subsection{Full tightness, non-intersecting line ensembles and Brownian-Gibbs property}
\label{sub:Results}
The main contribution of this paper is a strengthening of the above mentioned results, with the extension of the one-dimensional  tightness  to a full tightness statement on path space.  To formulate this we need to introduce some more notation.    
Fix $a >0$ and $\lambda >1$. Given $\gamma >0$ and $k\in \bbN$, 
 let  $n >k$, 
 $[\ell , r]\supseteq [-\gamma , \gamma ]$, and   
consider the free boundary condition field $\bbP_{n; \ell , r} [\, \cdot\, | a, \lambda ]
$. 
We shall 
use $\mu^{k , \gamma}_{n ;\ell , r}$ for the induced distribution of $k$ top paths
$\uX = \lb X_1 (\cdot ) , \dots , X_k (\cdot )\rb$ over the interval $[-\gamma , \gamma]$. Similarly, we write $\mu^{0,k , \gamma}_{n ;\ell , r}$ for the distribution of $k$ top paths under the zero boundary condition field $\bbP^0_{n; \ell , r} [\, \cdot\, | a, \lambda ]$. 
In this way $\mu^{k , \gamma}_{n ;\ell , r},\mu^{0,k , \gamma}_{n ;\ell , r}$ are distributions on the set of continuous functions from $[-\gamma,\gamma]$ to $\bar \bbA_k^+$, denoted $\sfC\lb [-\gamma , \gamma ]; \bar\bbA_k^+\rb$. The latter is equipped with the {topology of uniform convergence}. 
\begin{theorem} 
\label{thm:tight-k} 
For any $k\in \bbN$ and $\gamma >0$ the family 
\be{eq:fam-g}
\lbr \mu^{k , \gamma}_{n ;\ell , r}\,,\;\,[\ell , r]\supseteq [-\gamma , \gamma  ], n>k\rbr
\ee
 is tight on  $\sfC\lb [-\gamma , \gamma ]; \bar\bbA_k^+\rb$. The same holds for the family $\lbr \mu^{0,k , \gamma}_{n ;\ell , r}\,,\;\,[\ell , r]\supseteq [-\gamma , \gamma  ], n>k\rbr$.
	\end{theorem}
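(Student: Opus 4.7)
I would upgrade the one-point tightness of Theorem~\ref{thm:main} to full path-space tightness by a Brownian-Gibbs resampling argument on an interval $[-\gamma',\gamma']$ slightly larger than $[-\gamma,\gamma]$, ultimately reducing the modulus of continuity problem to that of free Brownian bridges with tight endpoints.

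I would begin by invoking Theorem~\ref{thm:max-control} on $[-\gamma',\gamma']$ with some fixed $\alpha\in(0,\tfrac12)$. Combined with the stochastic monotonicity $X_{k+1} \prec \cdots \prec X_1$ of the ordered paths (Section~\ref{sub:SD}), this yields, for every $\eta>0$, a constant $M=M(\eta)$ and a good event $\calG_M := \lbr \max_{t\in[-\gamma',\gamma']} X_1(t) \leq M\rbr$ of probability at least $1-\eta$, uniformly in $n$ and in $[\ell,r]\supseteq[-\gamma',\gamma']$. On $\calG_M$ the endpoint data $\ux=(X_i(-\gamma'))_{i\leq k+1}$ and $\uy=(X_i(\gamma'))_{i\leq k+1}$ are tight in $[0,M]^{k+1}$, and the floor $X_{k+1}|_{[-\gamma',\gamma']}$ is tight in supremum norm.

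The product structure of the bridge measures in \eqref{eq:PF-BC-T02} then gives a Brownian-Gibbs property: conditional on the $\sigma$-algebra generated by $\ux$, $\uy$ and $X_{k+1}|_{[-\gamma',\gamma']}$, the restriction of $(X_1,\ldots,X_k)$ to $[-\gamma',\gamma']$ follows the law \eqref{eq:mu-meas} with floor $X_{k+1}$, tilts $\rho_i=a\lambda^{i-1}$, and boundary conditions given by the top $k$ components of $\ux,\uy$. On $\calG_M$ (intersected with a further event where the top $k$ paths are themselves curved-bounded), this conditional law is absolutely continuous with respect to the product of free Brownian bridges ${\mathbf B}^{\ux,\uy}_{k;-\gamma',\gamma'}$, with Radon-Nikodym density bounded by a function of $M$ and of the minimum gaps between consecutive coordinates of $\ux$ and of $\uy$: the area tilt factor lies in $[e^{-c(M)},1]$, and the conditional partition function equals the probability that $k$ free Brownian bridges between the given endpoints remain non-intersecting and above $X_{k+1}$, which is positive and estimable in terms of $M$ and the endpoint gaps. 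Since free Brownian bridges with endpoints in $[0,M]$ form a tight family on $\sfC([-\gamma,\gamma];\bbR)$ by standard Kolmogorov-Chentsov estimates, this comparison transfers equicontinuity to the top $k$ paths, and combined with the pointwise control above yields the desired tightness of $\mu^{k,\gamma}_{n;\ell,r}$ on $\sfC([-\gamma,\gamma];\bar\bbA_k^+)$.

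The principal obstacle I expect is the uniform lower bound on the conditional partition function when two consecutive entries of $\ux$ or $\uy$ nearly coincide. I would address this by showing that, under the polymer measure, the minimum gap between consecutive entries of $\ux$ and of $\uy$ is bounded below with high probability; one way is to average the choice of $\gamma'$ over a small window, so as to select a value at which the joint endpoint distribution has a uniformly continuous density (inherited from the smooth Gaussian density of the underlying Brownian bridges and the positivity \eqref{eq:PF-BC-T03}). The zero-boundary case $\mu^{0,k,\gamma}_{n;\ell,r}$ is handled by the identical argument, since the Brownian-Gibbs resampling on $[-\gamma',\gamma']$ is insensitive to the boundary condition imposed at $\pm T$ with $T\geq \gamma'$, and Theorem~\ref{thm:max-control} applies to the zero-boundary field thanks to the stochastic domination recalled in Section~\ref{sub:SD}.
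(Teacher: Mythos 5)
Your overall strategy coincides with the paper's (and with the Corwin--Hammond scheme it follows): control the curved maxima via Theorem~\ref{thm:max-control}, strip the geometric area tilts on a slightly enlarged window at the cost of a factor depending only on $M$, and compare the top $k$ paths to free Brownian bridges with endpoints in $\bbA_k^+(M)$, for which equicontinuity is standard (Lemma~\ref{lem:mkg-control_b}). You have also correctly isolated the one genuinely delicate point: the conditional partition function --- the probability that $k$ free bridges between the given endpoints stay ordered and above the floor --- degenerates as consecutive endpoint coordinates coalesce, so a naive Radon--Nikodym bound is not uniform.

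However, your proposed resolution of that point is a gap. You ask for a \emph{high-probability} lower bound on the minimal gaps of $\uX(\pm\gamma')$ under the polymer measure, uniformly in $n$ and in the interval, and you suggest extracting it from uniform continuity of the endpoint density after averaging over $\gamma'$. Uniform continuity of a density does not prevent the endpoint law from concentrating arbitrarily close to the diagonal $\{x_j=x_{j+1}\}$ as $n\to\infty$ (the density in fact vanishes there), and no quantitative, $n$-uniform control of this density is available a priori; the statement you need is essentially Proposition~\ref{prop:gkg-control}, which the paper proves only \emph{after} tightness and by means of the very estimate you are missing, so your route risks circularity. The paper circumvents this entirely: rather than bounding the conditional non-intersection probability from below at the (random) endpoints actually produced by the ensemble, it lower-bounds the \emph{average} of this quantity over the endpoint distribution of bridges subject to the ordering constraint only outside $[-\gamma,\gamma]$ (see \eqref{eq:decom-B} and the target bound \eqref{eq:targBound5}). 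Concretely, Lemma~\ref{lem:good_points-g} shows --- via the Varopoulos heat-kernel bounds \eqref{Harnack_principle} for the Weyl chamber, with harmonic function $U$ --- that such bridges are steered at times $\pm\gamma$ into the well-separated set $\calS_k(\eta,L)$ with probability at least $\chi>0$ (a positive-probability statement, not a high-probability one), while Lemma~\ref{lem:gaps} gives the non-intersection lower bound $\delta$ from such configurations; the resulting $\beta=\chi\delta$ then enters as a fixed divisor in front of $\sup_{\uv,\uw}{\bf \Gamma}^{\uv,\uw}_{k,\gamma}(E)$, which is made small afterwards. To complete your argument you would need either this averaging/steering device or an independent, $n$-uniform proof of the endpoint gap estimate.
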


We turn to the analysis of the consequences of the above tightness results regarding convergence to a limiting polymer measure describing infinitely many non-intersecting random lines. We start by recalling the definition of Brownian-Gibbs line ensembles.  We refer to 
\cite{corwinhammond,corwin2016kpz,dauvergne2018basic} for a more
comprehensive setup, as well as to earlier 
papers 
\cite{minlos2000gibbs,osada2004non,osada1999gibbs} on the subject. Here we formulate the concepts that are relevant for our setting.  

Define the spaces
\[\bbA_\infty^+ = \lbr \ux\in (0, \infty)^\bbN\, :\, x_1 > x_2 >\dots >0\rbr\, \subset \, 
\bar{\bbA}_\infty^+ = \lbr \ux\in [0, \infty)^\bbN\, :\, x_1 \geq  x_2 \geq \dots \geq 0\rbr .
\] 
The sample space is   
$\Omega =\sfC\lb \bbR,\bar{\bbA}_\infty^+\rb$, the set of continuous functions $f:\bbR\mapsto\bar{\bbA}_\infty^+$, equipped with the topology of uniform convergence of any finite number of paths on compact subsets, and   with the corresponding Borel $\sigma$-field $\calB$. The coordinate maps 
$\uX\in \Omega \mapsto X_i (t )$ are viewed as position of $i$-th particle at time $t$.  
Following \cite{corwinhammond}, for each $n\in\bbN$, and time interval $[\ell,r]\subset\bbR$, define
the {internal} and external $\sigma$-algebras
\be{eq:Bext} 
{
\calB_{n; \ell , r}^{\sfi} 
= \sigma\lb X_i (t ) :\, \text{{$t{\in} (\ell , r)$ {and} $i \leq n$}}\rb
}
\ {\rm \;and\;}\ 
\calB_{n; \ell , r}^{\sfe} 
= \sigma\lb X_i (t ):\, \text{{either $t{\notin} (\ell , r)$ {or} $i >n$}}\rb. 
\ee
Recall that $\bbE_{n ;\ell , r}^{\ux , \uy} \left[\, \cdot\, 
\Big| h, a,\lambda\right]$ stands for the expectation w.r.t.\ $n$-polymer measure with 
 $(a,\lambda)$-geometric area tilts with boundary conditions $(\ux,\uy)$ and floor $h$ on the interval $[\ell,r]$. 

\begin{definition}\label{def:BG}
A probability measure $\bbP$ on $\Omega$ is said to have the 
{\em Brownian-Gibbs property} with respect to $(a , \lambda )$-geometric area tilts \eqref{eq:GAT} if 
for any bounded measurable $F:\Omega\mapsto \bbR$, 
\be{eq:BG-prop} 
\bbE\lb \, F\, \Big| \, \calB^{\sfe}_{n ;\ell , r} \rb 
= 
\bbE_{n ;\ell , r}^{\uX^{(\leq n)} (\ell) , \uX^{(\leq n)} (r )} \left[\,F(\cdot,X^{(>n)}) 
\Big| X_{n+1},a,\lambda\right], 
\ee
$\bbP$-a.s for any $-\infty <\ell <r <\infty$ and $n\in \bbN$.
In \eqref{eq:BG-prop}, we use the notation $\uX^{(\leq n)}=(X_1,\dots,X_n)$ and $\uX^{(>n)}=(X_{n+1},X_{n+2},\dots)$.  A probability measure $\bbP$ on $\Omega$ with the above Brownian-Gibbs property is called a Brownian-Gibbs measure or a Brownian-Gibbs line ensemble with respect to $(a , \lambda )$-geometric area tilts. The line ensemble is said to be {\em non-intersecting} if $\bbP$ is concentrated on $\sfC\lb \bbR,{\bbA}_\infty^+\rb\subset \sfC\lb \bbR,\bar{\bbA}_\infty^+\rb$. 
\end{definition}

Notice that measures $\bbP^{\ux,\uy}_{m ;S , T}$ and $\bbP_{m ;S , T}$ describing $m$ lines in the time interval $[S,T]$, as defined in Section \ref{sec:geopol}, are trivially extendable to the 
whole of $\Omega$  
by setting $X_i \equiv 0$ for $i >m$ and $\uX (t )\equiv \uX (T)$ for $t >T$ (respectively, $\uX (s )\equiv \uX (S)$ for $s <S$).
Moreover, these measures satisfy the  Brownian-Gibbs property \eqref{eq:BG-prop} whenever $S\leq \ell<r\leq T$ and $n\leq m$.

Theorem \ref{thm:tight-k} says that any sequence from $\{\mu^{k , \gamma}_{n ;\ell , r}\,,\;\,[\ell , r]\supseteq [-\gamma , \gamma  ], n>k\}$ has a weakly converging subsequence in the space of probability measures on $\sfC\lb [-\gamma , \gamma ]; \bar\bbA_k^+\rb$, for all fixed $k\in\bbN$ and $\gamma>0$. Moreover, by using an extraction argument together with consistency of marginals along $\gamma\to\infty$ and $k\to\infty$,  Theorem \ref{thm:tight-k} also implies that any sequence from $ \{\bbP_{n ;\ell , r}\}$ or from $\{ \bbP^0_{n ;\ell , r}\}$ has a subsequence that converges weakly to a probability measure $\bbP$ on $\sfC\lb \bbR,\bar{\bbA}_\infty^+\rb$. 
Concerning the nature of the limit points, the main consequences of our analysis can be summarized as follows. 
\begin{theorem} 
\label{thm:coarse1} 
Any sub-sequential limit (as $n\to\infty , \ell\to -\infty$ and 
$r\to\infty$)  $\bbP$ of  $ \{\bbP_{n ;\ell , r}\}$ or $\{ \bbP^0_{n ;\ell , r}\}$	
 has the following properties: 
 \begin{enumerate} 
 \item $\bbP$ is Brownian-Gibbs with respect to $(a , \lambda )$-geometric area tilts.
 \item $\bbP$ is non-intersecting. 
 
 \item Control of maxima: For any $\gamma>0$, there exists $C=C(\gamma)>0$ such that for all $k\in\bbN$, and $M>0$: 
\be{eq:contr_max} 
\bbP\lb \max_{t\in [-\gamma , \gamma]} X_{k}(t) >\lambda^{-(k-1)/3}M \rb \leq \frac{C}M\,.
\ee
 \item {Absolute continuity with respect to independent Brownian bridges: For any $k\in \bbN, M >0$ and $\gamma >0$ there exists 
 $\beta = \beta (k , M , \gamma )>0$ such that 
\be{eq:abs-cont} 
\bbP \lb E \rb \leq \frac{ C( \gamma )}{M} + 
\frac{1}{\beta} 
\sup_{\uv, \uw \in \bbA_n^+ ( M) }
{{\bf \Gamma}}_{{k, \gamma}}^{\uv, \uw} 
\lb  E\rb , 
\ee
for any $E\in \calB_{k; \gamma}^{\sfi}$. 
 }
 	\end{enumerate}
	\end{theorem}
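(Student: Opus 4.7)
The plan is to establish the four conclusions in order, with (1) and (2) coming from passage-to-the-limit arguments and (3)--(4) following from a Brownian rescaling and Brownian-Gibbs manipulations built on Theorem~\ref{thm:max-control}. For (1), each pre-limit $\bbP_{m;\ell',r'}$ with $m>n$ and $[\ell',r']\supseteq[\ell,r]$ satisfies the Brownian-Gibbs identity \eqref{eq:BG-prop} for $n$ on $[\ell,r]$ by construction, and the right-hand side of \eqref{eq:BG-prop} is an explicit ratio of Brownian-bridge integrals, continuous in the boundary data $\uX^{(\le n)}(\ell)$, $\uX^{(\le n)}(r)$ and in the floor $X_{n+1}|_{[\ell,r]}$, with strictly positive denominator once the boundary data are strictly ordered (which is part (2) below); thus both sides pass to any subsequential weak limit, giving (1). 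For (2), conditionally on $\calB^{\sfe}_{n;\ell,r}$ the tuple $(X_1,\dots,X_n)$ is a system of $n$ non-colliding area-tilted Brownian bridges above $X_{n+1}$ with strictly ordered boundary values, and such an ensemble is a.s.\ strictly ordered on $(\ell,r)$ by the classical Karlin--McGregor / bridge regularization argument; a countable union over $n\in\bbN$ and rational $\ell<r$ yields $\bbP\bigl(\sfC(\bbR,\bbA_\infty^+)\bigr)=1$.

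For (3), the key observation is that the $k$-th path carries tilt $a\lambda^{k-1}$ and that the Brownian rescaling $s=\lambda^{-(k-1)/3}$ in space together with $s^2=\lambda^{-2(k-1)/3}$ in time converts an area-tilt of strength $a\lambda^{k-1}$ into one of strength $a$. First, by the monotonicity/FKG arguments of Section~\ref{sub:SD}, the law of $(X_k,X_{k+1},\ldots,X_n)$ under $\bbP_{n;\ell,r}[\,\cdot\,|\,a,\lambda]$ is stochastically dominated by that of the top paths of the $(n-k+1)$-line polymer with tilts $a\lambda^{k-1},a\lambda^k,\ldots,a\lambda^{n-1}$ (the upper constraint from $X_{k-1}$ is released). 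Setting $\tilde X_i(\tilde t):=\lambda^{(k-1)/3}X_{k+i-1}\bigl(\lambda^{-2(k-1)/3}\tilde t\bigr)$ turns this into the standard geometric area-tilted system on a rescaled (longer) time interval with tilts $a,a\lambda,a\lambda^2,\ldots$. Theorem~\ref{thm:max-control}, applied uniformly over $n$ and over the rescaled span (and optimized in $\alpha\in(0,1/2)$), controls the curved maximum of $\tilde X_1$; transporting the bound back to the original scale and using Markov's inequality yields \eqref{eq:contr_max} at the pre-limit level, and Fatou / portmanteau propagate it to $\bbP$.

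For (4), I would apply the Brownian-Gibbs identity with $n=k$ and $[\ell,r]=[-\gamma,\gamma]$ to express $\bbP(E)$, for $E\in\calB^{\sfi}_{k;\gamma}$, as an average of the conditional probability $\bbP^{\uv,\uw}_{k;-\gamma,\gamma}[E\,|\,X_{k+1},a,\lambda]$ over boundary data $\uv,\uw$ and floor $X_{k+1}$. Split according to the event $G_M:=\{\max_{t\in[-\gamma,\gamma]}X_1(t)\le M\}$, which has probability at least $1-C(\gamma)/M$ by (3) with $k=1$ and which, via the strict ordering $X_1\ge X_2\ge\cdots$, simultaneously forces $\uv,\uw\in\bbA_k^+(M)$ and bounds the floor by $M$. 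On $G_M$, the density of the conditional law with respect to ${\bf \Gamma}^{\uv,\uw}_{k,\gamma}$ is bounded above by $1/Z^{\uv,\uw}_{k;-\gamma,\gamma}(X_{k+1},a,\lambda)$, and a standard lower bound on this partition function, obtained by restricting the Brownian-bridge integral to a tube of strictly ordered paths lying in a fixed sub-interval of $(0,M)$ well above the bounded floor, supplies $\beta(k,M,\gamma)>0$. Combining the two pieces gives \eqref{eq:abs-cont}.

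The main obstacle is Part~(3): setting up the scaling and the FKG-type monotonicity that releases the $X_{k-1}$ constraint so that Theorem~\ref{thm:max-control} produces a bound matching the exponent $\lambda^{-(k-1)/3}$ uniformly in $k$ and $n$. Parts~(1), (2), and (4) then reduce to routine Brownian-Gibbs manipulations once (3) is in hand.
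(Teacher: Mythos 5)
Your part (3) is essentially the paper's own argument (Brownian scaling by $\lambda^{-(k-1)/3}$ in space and $\lambda^{-2(k-1)/3}$ in time, stochastic domination to release the constraint from $X_{k-1}$, then Theorem \ref{thm:max-control} and Markov's inequality), and the overall architecture of your part (4) parallels the paper's. There is, however, a genuine gap running through parts (1), (2) and (4): you never establish a quantitative lower bound on the minimal gaps $\min_{i\le k-1}\inf_{s\in[-\gamma,\gamma]}\bigl(X_i(s)-X_{i+1}(s)\bigr)$ that is \emph{uniform} in $n$, in the time horizon, and in the (possibly nearly degenerate) boundary and floor data, and this is precisely the load-bearing estimate. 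Concretely: (a) your argument for (2) invokes the Gibbs property of the limit, while your argument for (1) requires strictly ordered boundary data with a "strictly positive denominator", i.e.\ (2); the two are circular, and neither rules out that the weak limit charges configurations in $\bar{\bbA}_\infty^+\setminus\bbA_\infty^+$ where consecutive paths touch --- weak limits can concentrate on such closed degenerate sets unless this is excluded by a uniform prelimit estimate. (b) In (4), your $\beta$ comes from a "tube" lower bound on $Z^{\uv,\uw}_{k;-\gamma,\gamma}$, but when the entries of $\uv$ or $\uw$ have gaps of size $\epsilon$ the probability that $k$ bridges remain ordered is of order $\prod_{i<j}(v_i-v_j)(w_i-w_j)\to 0$, so no tube from $\uv$ to $\uw$ yields a bound uniform over $\uv,\uw\in\bbA_k^+(M)$: the constant you produce depends on the gaps of the boundary data, not only on $(k,M,\gamma)$.

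The paper supplies exactly this missing input. Proposition \ref{prop:gkg-control} (uniform control of $g^{k,\gamma}$, proved by reduction to untilted bridges and reflection-principle computations) gives (2) directly via the portmanteau theorem applied to the closed event $\{g^{k,\gamma}(\uX)\le\delta\}$; the Brownian--Gibbs property (1) is then obtained by the Corwin--Hammond coupling argument, which itself consumes the gap control; and the uniformity of $\beta$ in (4) comes from \eqref{eq:targBound5}, proved by a two-step steering argument (Lemmas \ref{lem:gaps} and \ref{lem:good_points-g}) in which one first shows, using the Weyl-chamber heat-kernel bounds \eqref{Harnack_principle} and the harmonic function $U$, that conditionally on the external ordering constraint the boundary values at $\pm\gamma$ land with uniformly positive probability in a set of well-separated points; the singular factors $\prod_{i<j}(x_i-x_j)$ then cancel between numerator and denominator instead of being bounded below pointwise. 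Without an argument of this type your proposal does not close.
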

	Finally, let us remark that Theorem~\ref{thm:coarse1} does not imply unicity of limiting points, and therefore it does not allow us to conclude that $\bbP_{n ;\ell , r}$ or $\bbP^0_{n ;\ell , r}$ actually converge as $n\to\infty , \ell\to -\infty$ and 
$r\to\infty$. 
{
Moreover, Theorem~\ref{thm:coarse1} does not address the issue of ergodicity or even time stationarity of limiting points.  Neither it
addresses the issue of  representation of limiting points as infinite dimensional  diffusions. 
}

 However, thanks to monotonicity,  we shall  obtain the desired convergence {and time stationarity}, at least for the zero-boundary condition field. We believe that the same holds for free boundary conditions, and that the limiting ensembles coincide. 
\begin{theorem} 
	\label{thm:coarse2} 
	There is a unique (as $n\to\infty , \ell\to -\infty$ and 
	$r\to\infty$) limit $\bbP^{0}$ of 
	the family $\bbP^0_{n ;\ell , r}$. Besides the properties 
	listed in Theorem~\ref{thm:coarse1},  $\bbP^0$  is {time stationary}. 
\end{theorem}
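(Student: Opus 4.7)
The plan is to exploit two stochastic monotonicities that are specific to zero boundary conditions---in the number of paths $n$ and in the interval $[\ell,r]$---to upgrade the tightness of Theorem~\ref{thm:tight-k} into monotone convergence to a unique limit $\bbP^0$. Time stationarity will then follow from the manifest translation invariance of the Brownian-bridge kernel and of the area-tilt weights.

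For monotonicity in $n$, I would apply the Brownian-Gibbs property of $\bbP^0_{n+1;\ell,r}$ to the top $n$ paths: conditionally on $X_{n+1}$, the joint law of $(X_1,\dots,X_n)$ on $[\ell,r]$ is the $n$-polymer measure with zero boundary values but with floor $X_{n+1}$ in place of $0$. Since $X_{n+1}\succ 0$ on $(\ell,r)$ almost surely, the FKG-type monotonicity of the polymer measure in the floor (which I expect to be established in the SD/FKG material of Section~\ref{sub:SD}) yields that the marginal of the top $n$ paths under $\bbP^0_{n+1;\ell,r}$ stochastically dominates $\bbP^0_{n;\ell,r}$. For monotonicity in the interval, given $[\ell_1,r_1]\subset [\ell_2,r_2]$, I would apply Brownian-Gibbs to $\bbP^0_{n;\ell_2,r_2}$ at the sub-interval $[\ell_1,r_1]$: conditionally on the boundary values $\uX(\ell_1),\uX(r_1)\in \bbA_n^+$, the restriction to $[\ell_1,r_1]$ is $\bbP^{\uX(\ell_1),\uX(r_1)}_{n;\ell_1,r_1}$, and since these boundary values dominate $\underline 0$ almost surely, monotonicity in boundary conditions produces stochastic domination over $\bbP^0_{n;\ell_1,r_1}$.

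These two steps together show that the finite-dimensional marginals of the net $\lbr \bbP^0_{n;\ell,r}\rbr$ are stochastically increasing in the partial order obtained by increasing $n$ and by enlarging $[\ell,r]$. Combined with Theorem~\ref{thm:tight-k}, the expectation of every bounded increasing $\calB^{\sfi}_{k;\gamma}$-measurable functional converges to a unique limit, and a monotone class argument upgrades this to weak convergence to a unique $\bbP^0$ on $\Omega=\sfC(\bbR,\bar\bbA^+_\infty)$, independent of the order in which $n\to\infty,\ell\to-\infty,r\to\infty$ are taken. For time stationarity, note that for every $s\in\bbR$ the time-shift map $\tau_s\uX(t)=\uX(t-s)$ satisfies $\bbP^0_{n;\ell,r}\circ \tau_s^{-1}=\bbP^0_{n;\ell+s,r+s}$, because the Brownian-bridge kernel, the cone $\bbA_n^+$, and the area-tilt weights $\exp(-\sum_i a\lambda^{i-1}\calA_{\ell,r}(X_i))$ are all invariant under joint translations of time and domain; passing to the limit on both sides and invoking uniqueness yields $\bbP^0\circ \tau_s^{-1}=\bbP^0$.

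The main obstacle is the rigorous verification of the FKG monotonicity inputs powering both steps: one needs stochastic monotonicity of the polymer measure -- with non-crossing constraints, geometric area tilts, floor, and boundary data -- as a function of the floor and of $(\ux,\uy)$. The area-tilt weight is coordinate-wise decreasing in $\uX$, and the indicator $\1_{\Omega^{\uh}_{n;\ell,r}}$ is monotone in $h_-$ and in $(\ux,\uy)$, so the standard FKG for ordered stacks of Brownian bridges should suffice; however, one must carefully justify the limiting procedure defining the zero-boundary measure at the corner $\ux=\uy=\underline 0\in \partial \bar\bbA_n^+$ and verify that the FKG lattice structure is preserved by conditioning on the non-crossing event $\Omega^+_{n;\ell,r}$.
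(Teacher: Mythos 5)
Your proposal is correct and follows essentially the same route as the paper: the paper also deduces uniqueness from monotonicity of the probabilities of increasing events in $n$ and in $[\ell,r]$, obtained by combining the Brownian--Gibbs property on sub-intervals with the stochastic domination of Lemma~\ref{lem:SD} (which is exactly the FKG input you flag as the main obstacle, including the approximation handling the corner boundary condition $\ux=\uy=\underline 0$), and then gets time stationarity from the translation covariance $\bbP^0_{n;\ell,r}\circ\tau_s^{-1}=\bbP^0_{n;\ell+s,r+s}$ together with uniqueness of the limit. The only cosmetic difference is that the paper works directly with the finite-dimensional increasing events $\{X_{i_j}(s_j)>t_j\}$ rather than with general bounded increasing functionals plus a monotone class argument.
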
 

\section{Tightness of ensembles of top paths}\label{sec:scheme}
We follow the scheme developed in the seminal \cite{corwinhammond}. Since, however, 
geometric area tilts preclude exact solutions and, somehow, complicate matters, we had to
modify, adjust and eventually simplify  the arguments involved. 
\subsection{\bf Stochastic domination.} 
\label{sub:SD} 
As in \cite{corwinhammond,CIW18} stochastic domination plays an important technical role in our approach. 
Equip $\Omega_{n; \ell , r }^{+}$ with the partial  order $\prec $, defined by 
$$
\uX\prec \uY\;\;\text{iff}\;\; X_i\prec Y_i\,,\;\text{for all }\,i=1, \dots , n,
$$
and let $\fkg $ denote the associated notion of stochastic domination of probability measures. Recall \eqref{eq:B-y-meas}. 
{
	As it was  proved in \cite{corwinhammond} 
	\begin{lemma}\label{lem:SD-CH}
		For any $n$,$\ell , r$, $h_-\prec g_-$ and  $h_+ \prec g_+$ the following holds. 
		If,  $\ux\prec \uu$ and $\uy\prec \uv$, then 
		\be{eq:FKG-CH} 
		{\mathbf B}_{\ell , r}^{\ux, \uy}\left[\cdot   | h_- , h_+\right]\fkg 
		{\mathbf B}_{ \ell , r}^{\uu , \uv}\left[\cdot  | g_- , g_+\right].
		\ee 
	\end{lemma}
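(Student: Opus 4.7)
The plan is to follow the standard monotone coupling argument, via Holley's criterion applied to a discretization, and then pass to the continuum. Since the lemma is cited as a result of \cite{corwinhammond}, I expect the author's proof to be a brief pointer; I will sketch the natural strategy a reader would reconstruct.

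First I would discretize time: fix a grid $\ell = t_0 < t_1 < \dots < t_N = r$ and replace each Brownian bridge by its restriction to the grid. After conditioning on the event $\Omega_{n;\ell,r}^{\uh}$ and the event $\Omega_{n;\ell,r}^{\ug}$ respectively (the discrete analogues, requiring ordering and floor/ceiling constraints only at the grid points), both measures become absolutely continuous probability measures on $(\bbR^n)^{N-1}$ with explicit densities of product form
\[
\rho(\uX_{t_1},\dots,\uX_{t_{N-1}}) \;\propto\; \prod_{k=0}^{N-1}\prod_{i=1}^n q_{t_{k+1}-t_k}\!\bigl(X_i(t_k),X_i(t_{k+1})\bigr)\cdot \1_{\text{ordered}}\cdot\1_{\{h_-\prec X_n\}}\cdot\1_{\{X_1\prec h_+\}},
\]
with the boundary values $\uX(\ell),\uX(r)$ fixed to $(\ux,\uy)$ or $(\uu,\uv)$.

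Next I would apply Holley's criterion on the lattice $(\bbR^n)^{N-1}$ with the componentwise partial order induced by $\prec$. Writing $\rho_1$ for the density with upper parameters $(\uu,\uv,g_-,g_+)$ and $\rho_2$ for the density with lower parameters $(\ux,\uy,h_-,h_+)$, I need the inequality
\[
\rho_1(\uX \vee \uY)\,\rho_2(\uX \wedge \uY) \;\geq\; \rho_1(\uX)\,\rho_2(\uY).
\]
The Gaussian transition density is log-supermodular, $q_s(x\vee x',y\vee y')\,q_s(x\wedge x',y\wedge y') \geq q_s(x,y)\,q_s(x',y')$, which handles the kinetic factor. The boundary contribution is handled by $\ux \vee \uu = \uu$, $\uy \vee \uv = \uv$ (using the hypotheses $\ux\prec\uu,\uy\prec\uv$). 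For the indicator factors, one uses that if $\uX$ and $\uY$ are both ordered in $\bbA_n^+$, then so are the componentwise max and min, and that $h_-\prec g_-$, $h_+\prec g_+$ guarantee the floor/ceiling events distribute correctly between the ``max'' and ``min'' sides. Holley's theorem then produces a monotone coupling of the two discretized laws.

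Finally, I would take the continuum limit $N \to \infty$ along refining grids. The discrete conditioned bridges converge weakly to the corresponding continuum objects (this requires a standard but nontrivial argument to push the conditioning through, using continuity of the floor/ceiling and, for instance, slight enlargements of the constraints to circumvent degeneracy of the continuous-time non-intersection event). Stochastic domination is preserved under weak limits on a Polish space with a closed partial order. The main obstacle is this last limiting step: the ordering event on continuous time is a null-measure boundary event under independent Brownian bridges, so one has to compare the discrete and continuous conditionings with care — the cleanest route is to first prove the inequality for slightly thickened strict-inequality events and then let the thickening vanish, using continuity of paths to identify the limits.
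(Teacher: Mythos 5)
The paper offers no proof of this lemma at all: it is stated as a quoted result from Corwin--Hammond, so the only comparison available is with the argument in that reference, and your sketch reproduces it in essence --- discretize to grid times, establish monotonicity of the conditioned discrete ensembles, and pass to the continuum limit, where the limit is non-degenerate because for strictly ordered endpoints the ordering event has positive probability. The only (cosmetic) difference is that Corwin--Hammond build an explicit monotone Markov-chain coupling of the discrete ensembles rather than invoking Holley's criterion directly; since Holley's inequality is itself standardly proved by exactly such a coupling, and your verification of the lattice condition for the Gaussian kernels and the ordering/floor/ceiling indicators is correct, the two routes are equivalent.
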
	
	Our generalization \cite{CIW18} to measures with geometric area tilts reads as: 
}
\begin{lemma}\label{lem:SD}
	For any $n$,$\ell , r$, $h_-\prec g_-$, $h_+ \prec g_+$ and  $\urho\succ \ukappa$ the following holds. 
	If,  $\ux\prec \uu$ and $\uy\prec \uv$, then 
	\be{eq:FKG-2} 
	\bbP_{\ell , r}^{\ux, \uy}\left[\cdot   | h_- , h_+, \urho\right]\fkg 
	\bbP_{\ell , r}^{\uu , \uv}\left[\cdot  | g_- , g_+, \ukappa \right].
	\ee 
	Moreover, for an $n$-tuple $\uchi = \lbr \chi_1 ,\dots ,\chi_n\rbr$ of smooth boundary condition let $\uchi^\prime$ be the $n$-tuple of corresponding first derivatives. Then,
	\be{eq:FKG-1} 
	\bbP_{\ell , r}^{\uxi , \uzeta}\left[\cdot   | h_- , h_+, \urho\right]\fkg 
	\bbP_{ \ell , r}^{\unu , \ueta}\left[\cdot  | g_- , g_+, \ukappa \right], 
	\ee 
	whenever, 
	$h_-\prec g_-$, $h_+ \prec g^+$, $\urho\succ \ukappa$ and, 
	both $\underline\xi'
	\succ \unu'$ and $\underline\zeta'
	\succ \ueta'$. 
	In particular, \eqref{eq:FKG-1} holds if 
	$\uxi = \unu$ and $\uzeta = \ueta$ (by approximation without any assumptions on smoothness).
\end{lemma}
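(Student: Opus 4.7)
The plan is to verify Holley's criterion for the joint law of endpoints together with interior path values on a time discretization, and then to pass to the continuum limit. Both \eqref{eq:FKG-2} and \eqref{eq:FKG-1} fit the same scheme: \eqref{eq:FKG-2} corresponds to boundary weights peaked on Dirac masses (handled by monotone approximation), while \eqref{eq:FKG-1} directly uses smooth boundary weights $e^{-\uxi(\cdot)}, e^{-\unu(\cdot)}$.

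First I would discretize $[\ell,r]$ on a grid of mesh $\delta$ with interior points $t_1,\dots,t_N$, and write the joint density on $\bbA_n^+\times\bbA_n^+\times(\bbA_n^+)^{N}$ of the $\uxi$-measure with respect to Lebesgue as
\begin{equation*}
f^{(1)}(\ux,\uy,\uX)\;\propto\; e^{-\uxi(\ux)-\uzeta(\uy)}\prod_{k=0}^{N}\prod_{i=1}^n p_\delta\bigl(X_i(t_{k+1})-X_i(t_k)\bigr)\cdot\1_{\Omega^{\uh}_{n;\ell,r}}\cdot e^{-\delta\sum_{k,i}\rho_i(t_k)X_i(t_k)},
\end{equation*}
with the convention $\uX(t_0)=\ux$, $\uX(t_{N+1})=\uy$; define $f^{(2)}$ analogously with $(\unu,\ueta,\ug_-,\ug_+,\ukappa)$. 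The Holley condition $f^{(2)}(\Xi\vee\Xi')f^{(1)}(\Xi\wedge\Xi')\geq f^{(1)}(\Xi)f^{(2)}(\Xi')$ on configurations $\Xi=(\ux,\uy,\uX)$, $\Xi'=(\uu,\uv,\uY)$ factors into four contributions to be checked separately: (i) the boundary weights, which reduce to the coordinatewise scalar condition that $e^{\uxi_i-\unu_i}$ is non-decreasing, which is exactly the assumption $\uxi'\succ\unu'$, $\uzeta'\succ\ueta'$; (ii) the Gaussian transitions, which are log-concave and hence satisfy Holley automatically; (iii) the indicator factors, for which $\bbA_n^+$ is closed under coordinatewise $\wedge$ and $\vee$ (a short case analysis on consecutive coordinates), while $h_-\prec g_-$ and $h_+\prec g_+$ ensure that $\uX\wedge\uY$ respects the $\uh$-constraints and $\uX\vee\uY$ respects the $\ug$-constraints; (iv) the area tilts, for which the linearity identity $\calA^\alpha_{\ell,r}(\uV)+\calA^\alpha_{\ell,r}(\uW)=\calA^\alpha_{\ell,r}(\uV\wedge\uW)+\calA^\alpha_{\ell,r}(\uV\vee\uW)$ combined with $\urho\succ\ukappa$ yields a Holley ratio equal to $\exp\bigl(\delta\sum_{k,i}(\rho_i-\kappa_i)(t_k)(X_i(t_k)-Y_i(t_k))_+\bigr)\geq 1$.

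Once Holley holds on the discrete joint space, the discretized joint laws are stochastically ordered; projecting onto interior paths gives the discrete analogue of \eqref{eq:FKG-1}. Passing $\delta\to 0$ by standard Donsker-type convergence of the finite-dimensional marginals, together with the fact that stochastic domination survives weak limits, one obtains \eqref{eq:FKG-1}. The fixed-boundary statement \eqref{eq:FKG-2} is then recovered by specializing to sharply peaked boundary weights (or by rerunning the Holley check with endpoints held fixed, so that only ingredients (ii)--(iv) are active), and the extension to $\ux,\uy\in\bar\bbA_n^+$ is a standard monotone approximation based on Appendix~A of \cite{CIW18}.

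The main obstacle I anticipate lies in the continuum limit for ingredient (iii): the non-collision and hard-wall constraints define open sets on whose boundary the unconstrained Brownian bridge concentrates mass in the limit. To propagate stochastic domination cleanly through weak convergence one should approximate $\Omega^{+}_{n;\ell,r}$ from inside by strict gap sets $\lbr X_i-X_{i+1}\geq\eta\rbr$ (and similarly for the wall), prove domination on these, and conclude via a diagonal extraction $\eta,\delta\to 0$. The other three ingredients are essentially algebraic bookkeeping once the discretization is in place.
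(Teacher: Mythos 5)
The paper does not prove Lemma~\ref{lem:SD} here; it cites \cite{CIW18} (which, in the Corwin--Hammond tradition, typically establishes such monotonicity via a monotone coupling of a discretized resampling Markov chain whose invariant law is the polymer measure). Your route -- a discretization followed by a factor-by-factor verification of Holley's criterion and passage to the limit -- is a genuinely different but equally legitimate argument. What you gain is a direct, algebraic check in a single pass; what the Markov chain coupling buys is that it sidesteps the delicate limit through the constraint indicator, because the coupled chain already lives on the ordered, constrained configuration space.

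Your factor-by-factor Holley check is correct: (i) the boundary weight check reduces precisely to monotonicity of $\xi_i-\nu_i$, i.e.\ $\uxi'\succ\unu'$, and the fixed-endpoint case \eqref{eq:FKG-2} comes out by the same $2\times2$ lattice computation with unequal starts $x_i\le u_i$; (iii) the ordered-cone closure under $\wedge,\vee$ and the nesting of floors/ceilings are exactly right; (iv) the area-tilt Holley ratio is $\exp\bigl(\calA^{\urho-\ukappa}_{\ell,r}((\uX-\uY)_+)\bigr)\ge1$. Two small flags worth cleaning up. First, in (ii) ``log-concave'' should be ``log-supermodular'': what you need is $\partial_a\partial_b\log p_\delta(a-b)=1/\delta>0$ so that the product of transition factors satisfies the lattice condition; log-concavity alone would not do. Second, in your discretized density you wrote the continuous-time indicator $\1_{\Omega^{\uh}_{n;\ell,r}}$; to actually be on a finite-dimensional lattice you should use the grid event (ordering and floor/ceiling at $t_1,\dots,t_N$ only), and then recover the continuous-time constraint in the limit. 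You have correctly identified the continuum limit, specifically the null boundary of $\Omega^+_{n;\ell,r}$, as the place where the argument needs care, and your inner approximation by strict-gap sets plus a diagonal extraction is the standard and adequate way to close it.
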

\subsection{Proof of Theorem~\ref{thm:tight-k} }
To facilitate the exposition we shall restrict attention to symmetric 
intervals  $[\ell , r] = [-T , T]\supseteq [-\gamma -2,\gamma +2]$. The choice of the constant $2$ here is purely conventional, and any positive number could be handled with minor modifications. 

Following the standard compactness criterion (see \cite[Theorem 8.10]{billingsley}), and thanks to the single time tightness statement that follows from Theorem  \ref{thm:max-control}, the proof of Theorem \ref{thm:tight-k} will be reduced to the control of the maximal modulus of continuity
\be{eq:mkg1}
m^{k,\gamma}(\uX,\delta):= \max_{1\leq i\leq k}\suptwo{s,t\in[-\gamma,\gamma]:}{|s-t|<\delta}|X_i(s)-X_i(t)|.
\ee
Thus, Theorem~\ref{thm:tight-k} is a consequence of the following technical estimate. 
\begin{proposition} 
\label{prop:mkg-control}
Fix $\gamma>0$, $k\in\bbN$. For any $\eta,\ep>0$, there exists $\delta>0$ such that 
\be{eq:mkg2}
\sup_{T\geq\gamma+2, \,n>k}
\mu^{k , \gamma}_{n ;T}\left(
m^{k,\gamma}(\uX,\delta)\geq \eta\right)\leq \ep
 \ee
The estimate above holds as well for the zero boundary distribution $ \mu^{0,k , \gamma}_{n ;T}$. 
\end{proposition}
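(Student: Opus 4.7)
The plan is to apply the Brownian-Gibbs property on a slightly enlarged interval $[-\gamma-1,\gamma+1]$ to reduce the modulus-of-continuity estimate to one for a finite system of Brownian bridges with bounded boundary data. The argument I outline will work identically for $\mu^{k,\gamma}_{n;T}$ and $\mu^{0,k,\gamma}_{n;T}$, since the Brownian-Gibbs decomposition is insensitive to the boundary data at $\pm T$, and Theorem~\ref{thm:max-control} applies in both cases (the zero-boundary case by stochastic domination, Lemma~\ref{lem:SD}). Throughout, I denote the current measure simply by $\bbP$.

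First I would fix $\alpha\in(0,1/2)$ and consider the good event $G_M=\{X_1(\pm(\gamma+1))\leq M\}\cap\{\max_{|t|\leq\gamma+1}X_{k+1}(t)\leq M\}$. Since $X_{k+1}\leq X_1$ pointwise, both constraints are controlled by the curved maximum $\xi^{-\gamma-1,\gamma+1}_\alpha(X_1)$, whose expectation is uniformly bounded in $n$ and $T$ by Theorem~\ref{thm:max-control}; Markov's inequality then yields $\bbP(G_M^c)\leq \ep/2$ for $M=M(\ep,\gamma,k)$ large enough. Next I would apply the Brownian-Gibbs property with index set $\{1,\ldots,k\}$ on $[-\gamma-1,\gamma+1]$: conditionally on $\calB^{\sfe}_{k;-\gamma-1,\gamma+1}$, the law of $(X_1,\ldots,X_k)$ there is the polymer measure $\bbP^{\uX(-\gamma-1),\uX(\gamma+1)}_{k;-\gamma-1,\gamma+1}[\,\cdot\,|X_{k+1},a,\lambda]$, whose Radon-Nikodym derivative against the product Brownian bridge ${\mathbf B}^{\uX(-\gamma-1),\uX(\gamma+1)}_{k;-\gamma-1,\gamma+1}$ is at most $1/Z$, for the appropriate partition function $Z$. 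Hence for every $E\in\calB^{\sfi}_{k;-\gamma-1,\gamma+1}$ one has $\bbP(E\,|\,\calB^{\sfe}_{k;-\gamma-1,\gamma+1})\leq Z^{-1}{\mathbf B}^{\uX(-\gamma-1),\uX(\gamma+1)}_{k;-\gamma-1,\gamma+1}(E)$.

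I would then lower bound $Z$ on $G_M$ by restricting to the event that the $i$-th path ($1\leq i\leq k$) stays in the tube $[2M+2(k-i)M,\,3M+2(k-i)M]$ over $[-\gamma-1,\gamma+1]$. This configuration is consistent with all boundary data in $G_M$, respects the ordering and the random floor $X_{k+1}$, and has probability at least $c_1(M,k,\gamma)>0$ under the product Brownian bridge by a standard Gaussian tube computation. On this event each area $\calA_{-\gamma-1,\gamma+1}(X_i)$ is bounded by $C(M,k,\gamma)$, and since $k$ is fixed $\sum_{i=1}^k a\lambda^{i-1}$ is a finite constant, so the exponential tilt is bounded below. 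This gives $Z\geq\beta(M,k,\gamma,a,\lambda)>0$ on $G_M$. Finally, under ${\mathbf B}^{\uX(-\gamma-1),\uX(\gamma+1)}_{k;-\gamma-1,\gamma+1}$ the $k$ paths are independent Brownian bridges with endpoints in $[0,M]$, so classical bridge modulus estimates furnish, for any $\eta,\ep'>0$, some $\delta=\delta(\eta,\ep',M,k,\gamma)>0$ with ${\mathbf B}^{\uX(-\gamma-1),\uX(\gamma+1)}_{k;-\gamma-1,\gamma+1}(m^{k,\gamma}(\uX,\delta)\geq\eta)\leq\ep'$ uniformly over admissible endpoints. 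Taking $\ep'=\beta\ep/2$ and combining then yields $\bbP(\{m^{k,\gamma}(\uX,\delta)\geq\eta\}\cap G_M)\leq\ep/2$, which with $\bbP(G_M^c)\leq\ep/2$ proves \eqref{eq:mkg2}.

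The main obstacle I expect is the partition-function lower bound in the third step: producing a uniform positive constant $\beta$ while the tilts $a\lambda^{i-1}$ grow geometrically is possible here only because $k$ is held fixed, so that all constants $M,C,\sum_i a\lambda^{i-1}$ remain finite; the tubular construction is adequate in this regime, but an unbounded-$k$ version would require a considerably more delicate argument, which is presumably why the tightness in Theorem~\ref{thm:tight-k} is formulated for the top $k$ paths rather than directly for the full ensemble.
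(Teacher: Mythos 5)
There is a genuine gap in your third step, and it is exactly the point where the real difficulty of the proposition lives. You claim that on $G_M$ the partition function $Z$ of the conditional $k$-path polymer measure on $[-\gamma-1,\gamma+1]$ admits a uniform lower bound $\beta(M,k,\gamma,a,\lambda)>0$. This is false: the event $G_M$ controls only the \emph{magnitude} of the boundary data $\uX(\pm(\gamma+1))$, not the gaps between consecutive entries. The normalized non-intersection probability for $k$ ordered Brownian bridges behaves like $U(\ux)U(\uy)$ up to constants (this is the heat-kernel bound \eqref{Harnack_principle} used in the paper), and $U$ vanishes as the gaps $x_i-x_{i+1}$ (or the distance of $x_k$ to the floor) close; since $Z$ is dominated by this non-intersection probability, no uniform $\beta$ exists over boundary data that are merely bounded by $M$. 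Your tubular construction does not rescue this: as literally stated it is inconsistent (a bridge pinned at $x_i\leq M$ cannot stay in $[2M+2(k-i)M,\,3M+2(k-i)M]$), and any repaired corridor construction must still enforce the ordering near the endpoints, where the probability again degenerates with the boundary gaps.

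The paper's proof is organized precisely to avoid needing such a lower bound on $Z$ pointwise in the boundary data. It works with the \emph{conditional} quantity ${\mathbf B}_{k;\ell,r}^{\ux,\uy}\bigl(\Omega^f_{k;\gamma}\,\big|\,\Omega^f_{k;[\ell,r]\setminus[-\gamma,\gamma]}\bigr)$ and proves the uniform lower bound \eqref{eq:targBound5} for it by exploiting the buffer intervals $[-\gamma-2,-\gamma]$ and $[\gamma,\gamma+2]$: Lemma~\ref{lem:good_points-g} shows that, conditionally on non-intersection in the buffers, the configuration at $\pm\gamma$ lands with uniformly positive probability in the well-separated set $\calS_k(\eta,L)$ (gaps at least $\eta$, bottom path lifted above the floor), and Lemma~\ref{lem:gaps} then gives a uniform lower bound on the non-intersection probability from such well-separated data via \eqref{Harnack_principle}. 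This "steering" step, which also needs \eqref{eq:max1b} and the monotonicity lemmas, is the missing idea in your argument; the rest of your outline (control of maxima via Theorem~\ref{thm:max-control}, discarding the area tilts by bounding $\Phi(\uX)$ between $1$ and $1/C(M)$ for fixed $k$, and the uniform modulus-of-continuity estimate for bridges with bounded endpoints) matches the paper's Steps 1--2 and Lemma~\ref{lem:mkg-control_b}.
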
 

\subsection{Scheme of proof of Proposition \ref{prop:mkg-control}}
We fix $\gamma >0$, and $k\in \bbN$. All estimates to be derived below implicitly depend on these two parameters. On the other hand, they will be uniform in $n>k$ and $T\geq \gamma+2$. We give the details of the proof in the case of free boundary conditions only, since the case of  zero boundary conditions requires only cosmetic changes.

Consider 
the event $E=E(\delta,\eta)$ from \eqref{eq:mkg2}: 
\be{eq:mkgev}
E=\left\{
m^{k,\gamma}(\uX,\delta)\geq \eta\right\}.
\ee
Clearly, this event belongs to the $\sigma$-algebra $\sigma(X_i(t),\,t\in[-\gamma,\gamma],\,i=1,\dots,k)$.
Thus, we want to prove that 
\be{eq:mkgevtop}
\mu^{k , \gamma}_{n ;T}\left(E\right)=\bbP_{n;T}(E)\leq \ep.
\ee
We shall rely on the fact that the probability of the event $E$ can be made suitably small in the case of $k$ independent Brownian bridges, uniformly in the boundary conditions, provided the latter are taken in a bounded set. 
\begin{lemma} 
\label{lem:mkg-control_b}
For any $\eta,\ep>0$, and $M>0$, there exists $\delta>0$ such that 
\be{eq:mkg2b}
\sup_{\ux,\uy\in \bbA_k^+ (M)}
{{\bf \Gamma}}_{k, \gamma}^{\ux , \uy}\lb m^{k,\gamma}(\uX,\delta)\geq \eta  \rb \leq \ep 
 \ee
\end{lemma}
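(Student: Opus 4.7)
The measure ${\bf\Gamma}_{k,\gamma}^{\ux,\uy}$ is the law of $k$ independent (unconditioned) Brownian bridges on $[-\gamma,\gamma]$ with the prescribed endpoints, so the main idea is to decouple the boundary dependence from the genuine fluctuation part. For each $i\leq k$ I would write
\[
X_i(t)\,=\,\ell_i(t)\,+\,B_i(t),\qquad \ell_i(t):=x_i+\tfrac{t+\gamma}{2\gamma}(y_i-x_i),
\]
where $B_1,\ldots,B_k$ are i.i.d.\ standard Brownian bridges from $0$ to $0$ on $[-\gamma,\gamma]$. Since $\ux,\uy\in\bbA_k^+(M)$, the linear interpolants satisfy the deterministic bound
\[
|\ell_i(s)-\ell_i(t)|\;\leq\;\frac{|y_i-x_i|}{2\gamma}|s-t|\;\leq\;\frac{M}{2\gamma}\,\delta
\]
uniformly in $i$ and in the boundary data. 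Hence as soon as $\delta<\gamma\eta/M$ the contribution of the linear parts to $m^{k,\gamma}(\uX,\delta)$ is at most $\eta/2$, and it remains to control
\[
\sum_{i=1}^k\bbP\!\left(\suptwo{s,t\in[-\gamma,\gamma]}{|s-t|<\delta}|B_i(s)-B_i(t)|\geq \eta/2\right).
\]

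The key step is a standard modulus-of-continuity estimate for a Brownian bridge on $[-\gamma,\gamma]$. I would invoke Kolmogorov--Chentsov, or equivalently the classical Lévy modulus, to obtain a constant $C_\gamma>0$ and a function $\psi(\delta)\downarrow 0$ as $\delta\downarrow 0$ (for instance $\psi(\delta)=\sqrt{\delta\log(1/\delta)}$) such that
\[
\bbP\!\left(\suptwo{s,t\in[-\gamma,\gamma]}{|s-t|<\delta}|B_1(s)-B_1(t)|\geq \eta/2\right)\;\leq\;C_\gamma\,\frac{\gamma}{\delta}\exp\!\left(-\frac{c\eta^2}{\delta}\right),
\]
which can be read off from the reflection-principle expression for $\max_{|s-t|<\delta}|B(s)-B(t)|$ after a union bound over a grid of mesh $\delta$. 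Both this bound and the linear-part bound are uniform in $\ux,\uy\in\bbA_k^+(M)$.

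Combining these, choose $\delta>0$ small enough that (i) $M\delta/(2\gamma)<\eta/2$, and (ii) the Brownian-bridge probability above is bounded by $\ep/k$. A union bound over $i=1,\ldots,k$ then yields \eqref{eq:mkg2b} uniformly in $\ux,\uy\in\bbA_k^+(M)$. There is no real obstacle here: independence of the bridges under ${\bf\Gamma}_{k,\gamma}^{\ux,\uy}$ makes the union bound trivial, and the only mild subtlety is ensuring uniformity in the boundary conditions, which is precisely what the deterministic bound on the linear parts $\ell_i$ provides. This lemma is purely the ``input'' estimate used later together with the absolute-continuity statement \eqref{eq:abs-cont} to transfer tightness from independent bridges to the polymer measure $\mu^{k,\gamma}_{n;T}$.
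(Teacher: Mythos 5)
Your argument is correct and is exactly the route the paper takes: the paper's proof is a one-line reduction to $k$ independent standard Brownian bridges from $0$ to $0$ (citing Corwin--Hammond), and your decomposition $X_i=\ell_i+B_i$ with the deterministic bound $|\ell_i(s)-\ell_i(t)|\le M\delta/(2\gamma)$ is precisely how that reduction is carried out, with the remaining modulus-of-continuity estimate for standard bridges being classical. No gaps.
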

\begin{proof}
This follows by reduction to the case of $k$ independent Brownian bridges from 0 to 0, see e.g. \cite[p. 463]{corwinhammond}.  
\end{proof}
The proof of Proposition \ref{prop:mkg-control} will be achieved by controlling  the probability $\bbP_{n;T}(E)$ in terms of the probability $ {{\bf \Gamma}}_{k, \gamma}^{\ux , \uy}\lb E  \rb$, uniformly in $n>k, T\geq \gamma+2$. Our proof comprises several intermediate steps that can be roughly summarized as:

\bigskip 

\noindent
\step{1} Control of probabilities of large excursion events for top and bottom paths $X_1,X_{k+1}$. 
\smallskip 

\noindent
\step{2} Reduction to probabilities without area tilts.

\smallskip 

\noindent
\step{3} Use of heat kernel bounds for Brownian bridges in conical domains.

\smallskip 

\noindent
\step{4} 
Control of boundary values $X_{k}\lb\pm \gamma\rb$ 
and of $\min_{j=1}^{k-1}\lb X_j (\pm \gamma)- X_{j+1} (\pm\gamma)\rb$. 
\subsection{Control of maxima}
For each $M>0$, define the event
\be{eq:Event}
H^{i}(M) :=\lbr  
\max_{t\in [-\gamma - 2, \gamma +2]} X_{i} (t )\leq \lambda^{-(i-1)/3}M \rbr .
\ee
 The main estimates concerning large excursions  to be used below are gathered in the next lemma. Recall the notation \eqref{eq:B-y-meas}. In particular, ${\mathbf B}_{n; T}^{\ux , \uy } \left[  \, \cdot\, \big|\, 
	h
	\right]$ denotes the polymer measure with no area tilts and with floor $h$.   
 
\begin{lemma}\label{lem:contr_max}
The following holds for all fixed $k\in\bbN$ and $\gamma>0$:
\begin{enumerate}
\item For all $\ep>0$, there exists $L=L(k,\gamma,\ep)>0$ such that for all $M>0$, 
$\ux, \uy \in \bbA_k^+ ( M)$ and $h\prec M$, with $x_k>h(-\gamma-2)$ and $y_k>h(\gamma+2)$:
\be{eq:max1b}
{\mathbf B}_{k; \gamma+2}^{\ux , \uy } \left[ H^{1}(M+L)  \big|\, 
	h	\right] \geq 1-\ep. 
\ee
\item For all $i=1,\dots,n$, $\alpha\in(0,1/2)$ and $M>0$:
\be{eq:Markovk} 
\bbP_{n, T} \lb H^{i}(M) \big|~ a, \lambda \rb \geq 1- \frac{\xi^*_\alpha + (\gamma+2)^\alpha}{M}. 
\ee
where $\xi^*_\alpha$ is as in Theorem \ref{thm:max-control}.
\end{enumerate}
\end{lemma}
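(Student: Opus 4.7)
The plan for Part~(2) is to combine stochastic domination with a Brownian scaling trick in order to reduce the control of $X_i$ to that of the top path in the very same family of polymer ensembles to which Theorem~\ref{thm:max-control} applies. Concretely, Lemma~\ref{lem:SD} implies that under $\bbP_{n,T}[\cdot\,|\,a,\lambda]$ the marginal law of $(X_i,\ldots,X_n)$ is stochastically dominated by the $(n-i+1)$-polymer measure with the same tilts $a\lambda^{i-1},\ldots,a\lambda^{n-1}$ but with the ceiling coming from $X_1,\ldots,X_{i-1}$ removed. Setting $\tilde X_j(\tau):=\lambda^{(i-1)/3}X_{i+j-1}(\lambda^{-2(i-1)/3}\tau)$, Brownian scaling together with the identity $a\lambda^{i+j-2}\int X_{i+j-1}(t)\,dt=a\lambda^{j-1}\int\tilde X_j(\tau)\,d\tau$ shows that $(\tilde X_j)$ is distributed as a polymer with tilts $(a,a\lambda,\ldots)$ on a symmetric interval of half-length $\tilde T=T\lambda^{2(i-1)/3}$. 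Applying Theorem~\ref{thm:max-control} to the top path $\tilde X_1$ on this enlarged interval, using the elementary bound $\max_{|\tau|\le\tilde\gamma}\tilde X_1(\tau)\le \max_{|\tau|\le\tilde T}(\tilde X_1(\tau)-|\tau|^\alpha)_{+}+\tilde\gamma^\alpha$ with $\tilde\gamma=\lambda^{2(i-1)/3}(\gamma+2)$, unfolding the rescaling $X_i(t)=\lambda^{-(i-1)/3}\tilde X_1(\lambda^{2(i-1)/3}t)$, and invoking Markov's inequality will then produce the advertised tail bound.

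Part~(1) contains no area tilts, so the plan is to use the bare stochastic domination Lemma~\ref{lem:SD-CH} to collapse all admissible $(\ux,\uy,h)$ to a single fixed $k$-path Brownian bridge ensemble. I would pick the auxiliary boundary vector $\uu=(M+1,M+1-1/k,\ldots,M+1/k)\in\bbA_k^+$, which satisfies $\ux\prec\uu$ and $\uy\prec\uu$ for every $\ux,\uy\in\bbA_k^+(M)$, and the constant floor $g\equiv M\succ h$. Lemma~\ref{lem:SD-CH} then yields
\[
{\mathbf B}_{k;\gamma+2}^{\ux,\uy}[\,\cdot\,|\,h\,]\fkg {\mathbf B}_{k;\gamma+2}^{\uu,\uu}[\,\cdot\,|\,M\,].
\]
Since $\{\max X_1>M+L\}$ is a monotone event, its probability under the left-hand measure is at most the same quantity under the right-hand one. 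Translating by $-M$ in space identifies the right-hand measure with the fixed ensemble of $k$ non-intersecting Brownian bridges from $\tilde\uu:=(1,1-1/k,\ldots,1/k)$ to itself on $[-(\gamma+2),\gamma+2]$ above the zero floor, which is independent of $M,\ux,\uy,h$. A Brownian-Gibbs-type cascading argument---a reflection bound for $X_1$ conditional on $X_2$, then for $X_2$ conditional on $X_3$, and so on down to $X_k$ above $0$---will give Gaussian-in-$L$ tails for $\max\tilde X_1$, from which $L=L(k,\gamma,\ep)$ can be chosen so that the probability is at most $\ep$.

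The main obstacle for Part~(1) is achieving uniformity over all admissible $(M,\ux,\uy,h)$; the reduction above collapses this continuum of configurations to a single fixed ensemble and thereby bypasses the uniformity issue altogether. For Part~(2), the technical point is that after Brownian scaling the time interval grows by a factor $\lambda^{2(i-1)/3}$; this is harmless since Theorem~\ref{thm:max-control} is uniform in $\ell\le-\chi$ and $r\ge\chi$, so the curved-max bound for $\tilde X_1$ remains valid on the rescaled interval. A delicate point to monitor is that the factors $\lambda^{-(i-1)/3}$ and $\lambda^{2\alpha(i-1)/3}$ arising from the rescaling combine with the Markov threshold $\lambda^{-(i-1)/3}M$ in a way compatible with the $i$-free tail bound advertised in \eqref{eq:Markovk}, which forces one to keep $\alpha\in(0,1/2)$ close to $0$ when tracking constants.
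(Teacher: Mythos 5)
Your proposal follows essentially the same route as the paper: part (2) is precisely the paper's argument (stochastic domination via Lemma~\ref{lem:SD} to discard the top $i-1$ paths and their ceiling effect, Brownian scaling with exactly the exponents you compute, then Theorem~\ref{thm:max-control} plus Markov), and part (1) is the paper's recursive domination via Lemma~\ref{lem:SD-CH}, merely reorganized so that all admissible $(M,\ux,\uy,h)$ are first collapsed onto a single translated reference ensemble before cascading down the $k$ bridges. The only point worth recording is the one you already flag: since the rescaled window has half-length $\lambda^{2(i-1)/3}(\gamma+2)$, the literal output of your computation is $\bigl(\xi^*_\alpha+\lambda^{2\alpha(i-1)/3}(\gamma+2)^\alpha\bigr)/M$ rather than the $i$-uniform constant displayed in \eqref{eq:Markovk}; the paper delegates this constant-tracking to \cite{CIW18}, and for the uses made of the lemma (finitely many indices $i$ at fixed $k$) the $i$-dependent constant is harmless.
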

\begin{proof}
Using Lemma \ref{lem:SD-CH}  and a recursive argument (see e.g.\ \cite[Proposition 2.1]{CIW18}) , it is not hard to see that the maximum of $X_1$ under ${\mathbf B}_{k;T}^{\ux , \uy } \left[ \cdot \big|\, 
	h	\right]$ is stochastically dominated 
by $M$ plus the sum of $k$ independent copies of the maximum of the Brownian bridge of length $2T$ between $0$ and $0$ constrained to be nonnegative. 
This implies \eqref{eq:max1b}. 

Inequality \eqref{eq:Markovk} for $i=1$ is implied by Markov inequality and Theorem \ref{thm:max-control}. The case of general $i$ is obtained using Brownian scaling and the stochastic domination Lemma~\ref{lem:SD}, see \cite[Remark 2]{CIW18} for the details. 
\end{proof}

\subsection{Reduction to probabilities without area tilts} 
Estimate \eqref{eq:Markovk} applied to $i=1$ and $i=k+1$ implies that by taking $M$ sufficiently large (depending on $\ep$)
\be{eq:est1}
\bbP_{n;T}(E) \leq \ep/2 + \bbP_{n;T}\lb E\cap H^{1}(M)\cap H^{k+1}(M)\rb.
\ee
Recall the notation \eqref{eq:Om-h}. Since $E$ depends only on top $k$ paths, it follows that
\be{eq:est11}
\bbP_{n;T}\lb E\cap H^{1}(M)\cap H^{k+1}(M)\rb \leq \sup_{f\prec \lambda^{-k/3} M}\bbP_{n;T}\lb E\cap \Omega_{k, \gamma +2}^{f , {M}}~\Big| ~\Omega_{k, \gamma +2}^{f}\rb.
\ee
Cleary, given $f\prec \lambda^{-k/3}M$,  
\[ 
\bbP_{n, T} 
\lb  E \cap \Omega_{k, \gamma +2}^{f , {M}} 
~\Big| ~\Omega_{k, \gamma +2}^{f}\rb \leq 
\bbP_{n, T} 
\lb  E \cap \Omega_{k, \gamma +2}^{f , {M}} 
~\Big| ~  \uX (\pm (\gamma +2))\in {\bbA_{n}^+} (M ); \Omega_{k, \gamma +2}^{f , {2M}} \rb.
\]
 By the Brownian-Gibbs property, the last expression is bounded by
\[ 
\sup_{\ux, \uy \in \bbA_k^+ ( M)} \frac{
{\mathbf B}_{k, \gamma +2}^{\ux, \uy} 
\lb  \Phi(\uX);E \cap \Omega_{k, \gamma +2}^{f , {M}} \rb
}
{ {\mathbf B}_{k, \gamma +2}^{\ux, \uy} 
\lb  \Phi(\uX); \Omega_{k, \gamma +2}^{f , {2M}}\rb},
\]
where $\Phi(\uX)= {\rm e}^{-\sum_0^{k-1} a\lambda^{j}\calA_{\gamma+2} (X_i )}$.  In the numerator we estimate $\Phi(\uX)\leq 1$ while in the denominator we may estimate 
\[
\Phi(\uX)\geq e^{-2a{M}(2\gamma +4)\sum_0^{k-1} \lambda^j}=:
{\frac{1}{C(M)}}.
\] 
Taking $M$ appropriately large, \eqref{eq:max1b} shows that 
\[
{\mathbf B}_{k, \gamma +2}^{\ux, \uy} 
\lb   \Omega_{k, \gamma +2}^{f , {2M}} \rb 
\geq \frac{1}{2} 
{\mathbf B}_{k, \gamma +2}^{\ux, \uy} 
\lb   \Omega_{k, \gamma +2}^{f } \rb 
\]
uniformly in 
${f\prec \lambda^{-k/3} M}$ and  
${\ux, \uy \in \bbA_k^+ ( M)} $. 
Hence, we infer that {the probability term on the right hand side of}  \eqref{eq:est1} is bounded above by  
\be{eq:targBound3-1}
2	\,C(M)\sup_{f\prec \lambda^{-k/3} M}\ 
\sup_{\ux, \uy \in \bbA_k^+ ( M)} 
{\mathbf B}_{k, \gamma +2}^{\ux, \uy} 
\lb  E \cap \Omega_{k, \gamma +2}^{f , {M}} 
~\big|~ \Omega_{k, \gamma +2}^f\rb.
\ee
Above
it is understood that the 
conditional probability is taken to be zero if either $x_k < f (-\gamma -2)$ or $y_k < f (\gamma +2 )$. 
%

For simplicity of notation, we set $\ell=-\gamma-2$ and $r=\gamma+2$. Moreover, we call 
$\calF_M^k$ the family of 
 functions $f\geq 0$ on $[\ell , r]$ satisfying 
 $\sup_{t} f(t )\leq \lambda^{-k/3}M$.   
 Then, probabilities in \eqref{eq:targBound3-1} are
now written as 
 \be{eq:targBound4} 
 \sup_{f\in \calF_M^k }\ 
 \sup_{\ux, \uy \in \bbA_k^+ ( {M})} 
 {\mathbf B}_{k; \ell, r}^{\ux, \uy} 
 \lb  E \cap \Omega_{k; \ell, r}^{f , {M}}
 ~\big| ~ \Omega_{k; \ell, r}^f
 \rb.
 \ee
 Following Section~5 in \cite{corwinhammond} let us set 
 \be{eq:f1f}
 {
 \Omega_{k; \ell, r}^f =  
 \Omega_{k; \gamma}^f\cap 
 \Omega_{k; [\ell, r]\setminus [-\gamma ,\gamma]}^{f} ,
}
 \ee
 where we use the notation \eqref{eq:Om-h-g}.
 Then, 
 since $\Omega_{k; \ell, r}^{f , {M}} \subset \lbr X_1 (\pm \gamma) \leq M\rbr$, 
 \be{eq:bfB-bound}
 \begin{split}
 {\mathbf B}_{k; \ell, r}^{\ux, \uy} 
 \lb  E\cap \Omega_{k; \ell, r}^{f , {M}} ~\big| ~ \Omega_{k; \ell, r}^f\rb 
 &
 { 
 \leq {\mathbf B}_{k; \ell, r}^{\ux, \uy} 
 \lb  E;  X_1 (\pm \gamma) \leq M~\big| ~ 
 \Omega_{k; \gamma}^f
 \Omega_{k; [\ell, r]\setminus [-\gamma ,\gamma]}^{f}\rb
}
 \\
 &\leq \frac{{\mathbf B}_{k; \ell, r}^{\ux, \uy} 
 \lb  E~\big| ~ 
 X_1 (\pm \gamma) \leq M;\Omega_{k; [\ell, r]\setminus [-\gamma ,\gamma]}^{f}\rb
 }
{{\mathbf B}_{k; \ell, r}^{\ux, \uy} \lb
\Omega_{k; \gamma}^f ~\big|~
\Omega_{k; [\ell, r]\setminus [-\gamma ,\gamma]}^{f}	
\rb },
\end{split}
 \ee
 where we relied on elementary inequality 
 	\[
 \bbP (AB |CD) = \frac{\bbP (ABCD)}{\bbP (CD)} 
 	\leq 
 \frac{\bbP (A |B D) \bbP (D) }{\bbP (CD)} = 
  \frac{\bbP (A |B D)}{\bbP (C |D)} .
 \]
For the numerator in \eqref{eq:bfB-bound} we use the fact that  $E$ depends only on the time interval $[-\gamma ,\gamma]$, and the spatial Markov property for $k$ independent  Brownian bridges to obtain 
\be{eq:bfB-bound1}
{\mathbf B}_{k; \ell, r}^{\ux, \uy} 
 \lb  E~\big| ~ 
 X_1 (\pm \gamma) \leq M;\Omega_{k; [\ell, r]\setminus [-\gamma ,\gamma]}^{f}\rb
\leq \sup_{\uv, \uw \in \bbA_n^+ ( M) }
 {{\bf \Gamma}}_{{k, \gamma}}^{\uv, \uw} 
 \lb  E\rb.
 \ee
The last expression can be made arbitrarily small thanks to the estimate of  Lemma \ref{lem:mkg-control_b}.

In conclusion, to finish the proof of Proposition \ref{prop:mkg-control} we need to show
that for any $\gamma , M$ and $k$ there exists $\beta = \beta (k , M , \gamma ) >0$, such that 
\be{eq:targBound5} 
\inf_{f\in \calF_M^k }\  
\inf_{\ux, \uy \in \bbA_k^+ ( M)} 
{\mathbf B}_{k; \ell, r}^{\ux, \uy} \lb
\Omega_{k; \gamma}^f ~\big|~
\Omega_{k; [\ell, r]\setminus [-\gamma ,\gamma]}^{f}	
\rb 
\geq \beta .
\ee
Here  it is understood that $\ux,\uy\in\bbA_k^+ ( M)$
also satisfy $x_k>f(\ell)$, $y_k>f(r)$.

\subsection{{Main technical estimates about constrained  {Brownian bridges}}}
\label{sub:main}
{
	In the sequel $k\in\bbN$ is fixed and we employ the  reduced notation 
	$\Omega_{\ell, r}^{f}$,  
	$\Omega_{\gamma}^{f}$, 
	$\Omega_{[\ell, r]\setminus [-\gamma , \gamma]}^{f}$
	for the sets of  $k$-tuples of ordered  paths.
}

Let us take a look at our target \eqref{eq:targBound5}. 
By the spatial Markov property of Brownian bridges , 
\be{eq:decom-B}
{\mathbf B}_{\ell, r}^{\ux, \uy} \lb
\Omega_{ \gamma}^f ~\big|~
\Omega_{[\ell, r]\setminus [-\gamma ,\gamma]}^{f}\rb  = 
{\mathbf B}_{k; \ell, r}^{\ux, \uy} \lb
{\bf \Gamma}^{\uX (- \gamma ) , \uX (\gamma )}_{\gamma} (\Omega_{ \gamma}^f) 
 ~\big|~
\Omega_{[\ell, r]\setminus [-\gamma ,\gamma]}^{f}\rb.
\ee

\noindent 
\begin{definition} 
\label{def:Good-eps} 
Given $f\in \calF_M^k $ and $\delta >0$ let us define the set $\calG_{k, \gamma}^f (\delta )$ of pairs of boundary conditions 
$\lb\ux , \uy \rb \in \bbA_k^+\times \bbA_k^+$, such that 
\be{eq:Good-eps} 
{\mathbf B}^{\ux, \uy}_{\gamma} \lb \Omega_{\gamma}^f\rb \geq \delta . 
\ee	
	\end{definition}
{ 
In view of \eqref{eq:decom-B} our target \eqref{eq:targBound5} would follow if we show that 
there exist $\delta >0$ and $\chi >0$ such 
 the following estimate holds:
 \be{eq:Set-estimate} 
 \inf_{f\in \calF_M^k }\  
 \inf_{\ux, \uy \in \bbA_k^+ ( M)} 
 {\mathbf B}_{\ell, r}^{\ux, \uy} 
 \lb \big( \uX (-\gamma) , \uX (\gamma )\big)\in 
 \calG_{k, \gamma}^f (\delta )
 ~\big| ~ \Omega_{[\ell , r]\setminus [-\gamma ,\gamma]}^{f}\rb 
 \geq \chi .
 \ee
}
It remains to prove the following two lemmas: 
\begin{lemma} 
\label{lem:gaps} 
Consider sets 
\be{eq:gaps} 
\calS_k (\eta, L ) = \lbr  \ux~:~   x_k > 1+ M\lambda^{-k/3} ,x_1 <L  \rbr\bigcap_{j=1}^{k-1}\lbr x_j - x_{j+1} >\eta\rbr.
\ee	
Fix  $k\in \bbN$ and $\gamma >0$. Then  for any 
 $\eta  >0$ 
and $L > 1+ M\lambda^{-k/3} + k\eta $   there exists 
$\delta >0$
  such that 
\be{eq:inclusion-gaps} 
\calS_k (\eta, L )\times\calS_k (\eta, L ) \subset 
\calG_{k, \gamma}^f (\delta )
\ee
for any $f\in \calF_M^k $. 
\end{lemma}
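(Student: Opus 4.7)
The plan is to exhibit an explicit product event contained in $\Omega_{\gamma}^f$ whose unnormalized Brownian bridge mass is bounded below uniformly in $(\ux,\uy)\in\calS_k(\eta,L)\times\calS_k(\eta,L)$ and $f\in\calF_M^k$, thereby avoiding any delicate conditional estimate.

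First, I would replace $\Omega_{\gamma}^f$ by a tubular event built from straight-line interpolations. For each $i=1,\dots,k$, define
\[
\ell_i(t):=\tfrac{\gamma-t}{2\gamma}\,x_i+\tfrac{\gamma+t}{2\gamma}\,y_i,\qquad t\in[-\gamma,\gamma].
\]
Because $x_j-x_{j+1}>\eta$ and $y_j-y_{j+1}>\eta$, the difference $\ell_j(t)-\ell_{j+1}(t)$ is itself a convex combination of numbers exceeding $\eta$, hence $\ell_j(t)-\ell_{j+1}(t)>\eta$ on $[-\gamma,\gamma]$; similarly $\ell_k(t)\geq\min(x_k,y_k)>1+M\lambda^{-k/3}$. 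Setting $\rho:=\min(\eta,2)/4$ and introducing the tubes
\[
T_i:=\lbr X\in\sfC([-\gamma,\gamma];\bbR)\,:\,\sup_{t\in[-\gamma,\gamma]}|X(t)-\ell_i(t)|<\rho\rbr,
\]
it is immediate that $\bigcap_{i=1}^{k}\{X_i\in T_i\}\subset \Omega_{\gamma}^f$: the gap $X_j-X_{j+1}\geq\eta-2\rho\geq\eta/2>0$ gives strict ordering, and $X_k(t)>1+M\lambda^{-k/3}-\rho>M\lambda^{-k/3}\geq f(t)$ clears the floor.

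Next, I would exploit the product structure ${\mathbf B}^{\ux,\uy}_{\gamma}=\bigotimes_{i=1}^{k}{\mathbf B}^{x_i,y_i}_{-\gamma,\gamma}$ together with the decomposition ${\mathbf B}^{x_i,y_i}_{-\gamma,\gamma}=q_{2\gamma}(x_i,y_i)\,{\bf \Gamma}^{x_i,y_i}_{-\gamma,\gamma}$. Writing a bridge from $x_i$ to $y_i$ as $\ell_i(\cdot)+B_i(\cdot)$ with $B_i$ a standard Brownian bridge from $0$ to $0$ on $[-\gamma,\gamma]$, the normalized probability of $\{X_i\in T_i\}$ is exactly the small-ball probability $\bbP(\|B_i\|_\infty<\rho)$, a strictly positive constant depending only on $\gamma$ and $\rho$. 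The Gaussian factor $q_{2\gamma}(x_i,y_i)=(4\pi\gamma)^{-1/2}\exp(-(y_i-x_i)^2/(4\gamma))$ is bounded below using $|y_i-x_i|<L$. Multiplying the resulting lower bounds over $i=1,\dots,k$ yields the required $\delta=\delta(k,\gamma,\eta,L,M)>0$, uniform in the admissible boundary data and floor.

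I do not anticipate any serious obstacle. The one verification that deserves care is that the linear interpolations preserve both strict ordering and clearance above the floor \emph{throughout} $[-\gamma,\gamma]$, and this is automatic once one notes that the hypotheses $x_k,y_k>1+M\lambda^{-k/3}$ and $\sup f\leq M\lambda^{-k/3}$ provide a safety margin of at least $1$, which comfortably accommodates a tube of radius $\rho<1$.
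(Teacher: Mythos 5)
Your proof is correct, but it takes a genuinely different and more elementary route than the paper's. The paper proves \eqref{eq:inclusion-gaps} by splitting the bridge at the midpoint, invoking the Varopoulos heat--kernel estimate \eqref{Harnack_principle} for the killed semigroup in the Weyl chamber $\bbA_k^+$, and lower--bounding the harmonic function $U(\ux)=\prod_j x_j\prod_{i<j}(x_i^2-x_j^2)$ by $(2\eta)^{k(k-1)/2}$ on $\calS_k(\eta,L)$ after shifting the floor to the constant $\lambda^{-k/3}M$. You instead exhibit an explicit sub-event of $\Omega_\gamma^f$ --- the product of tubes of radius $\rho=\min(\eta,2)/4$ around the linear interpolations $\ell_i$ --- verify by convexity that this sub-event enforces ordering (gap at least $\eta-2\rho\geq\eta/2$) and floor clearance (margin at least $1-\rho>0$ over $\sup f\leq \lambda^{-k/3}M$), and then bound its unnormalized mass from below by $\bigl(q_{2\gamma}\text{-factor}\bigr)^k$ times the $k$-th power of a Brownian-bridge small-ball probability; all steps check out, including the correct accounting of the total mass $q_{2\gamma}(x_i,y_i)\geq (4\pi\gamma)^{-1/2}e^{-L^2/(4\gamma)}$ from $|x_i-y_i|<L$. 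Your argument is self-contained and avoids the external heat-kernel input entirely, which is a genuine simplification for this lemma; what it does not buy is the two-sided comparison \eqref{Harnack_principle} and the bound \eqref{eq:UB-Om-hat} in terms of $U$, which the paper sets up here and then reuses in Lemma~\ref{lem:good_points} and in the proof of \eqref{eq:chechXk1}, so those later arguments would still need Varopoulos's estimate introduced separately.
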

{
\begin{lemma} 
	\label{lem:good_points-g}
	There exists $\chi >0, \eta>0$ and   $L > 1+ M\lambda^{-k/3} + k\eta$ such that 
	\be{eq:calS-steer-g} 
	\inf_{f\in \calF_M^k }\  
	\inf_{\ux, \uy \in \bbA_k^+ ( M)}
	{\mathbf B}_{\ell , r}^{\ux ,\uy}\lb \lb \uX (-\gamma), \uX (\gamma)\rb \in \calS_k (\eta, L )\times\calS_k (\eta, L )~| ~
	\Omega_{[\ell , r]\setminus [-\gamma ,\gamma]}^{f}
	\rb \geq \chi . 
	\ee	
\end{lemma}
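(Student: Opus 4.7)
My plan is to use the spatial Markov property of Brownian bridges to split the outer conditioning into two independent one-sided endpoint estimates, and then handle each via the Karlin--McGregor determinantal formula.

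Writing $(\uv,\uw):=(\uX(-\gamma),\uX(\gamma))$, observe that $\Omega^f_{[\ell,-\gamma]}$ and $\Omega^f_{[\gamma,r]}$ are conditionally independent given $(\uv,\uw)$, so under the conditional measure ${\mathbf B}^{\ux,\uy}_{\ell,r}[\,\cdot\,|\,\Omega^f_{[\ell,r]\setminus[-\gamma,\gamma]}]$ the joint density of $(\uv,\uw)$ on $\bar\bbA_k^+\times\bar\bbA_k^+$ is proportional to
\[
q_2(\ux,\uv)\,q_{2\gamma}(\uv,\uw)\,q_2(\uw,\uy)\,{{\bf \Gamma}}^{\ux,\uv}_{\ell,-\gamma}[\Omega^f]\,{{\bf \Gamma}}^{\uw,\uy}_{\gamma,r}[\Omega^f],
\]
in which $\uv$ and $\uw$ are coupled only through the middle heat kernel. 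The factor $q_{2\gamma}(\uv,\uw)$ is globally $\leq(4\pi\gamma)^{-k/2}$ and, since $\calS_k(\eta,L)\subset[0,L]^k$, bounded below by $c(L,\gamma)>0$ on $\calS_k\times\calS_k$. Applying these two-sided bounds to the density decouples the $\uv$- and $\uw$-integrals, and the target estimate reduces to the product of two one-sided marginal probabilities of the form
\[
{\mathbf P}^{\ux}_{\ell,-\gamma}\bigl[\uX(-\gamma)\in\calS_k(\eta,L)\,\big|\,\Omega^f_{\ell,-\gamma}\bigr]\geq c_1>0,
\]
uniformly in $\ux\in\bbA_k^+(M)$ with $x_k>f(\ell)$ and $f\in\calF_M^k$, together with a mirror statement on the right.

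For this one-sided bound I would apply the Karlin--McGregor determinantal identity: the joint density of $\uX(-\gamma)$ on $\bar\bbA_k^+$ under $k$ non-intersecting Brownian motions started at $\ux$ on the length-$2$ interval equals $\det(q_2(x_i,v_j))_{i,j=1}^k$. This determinant factors at leading order as the Vandermonde $\prod_{i<j}(x_i-x_j)$ times a smooth non-degenerate function as $\ux$ approaches the ordering wall, and this Vandermonde factor cancels between the numerator (mass in $\calS_k$) and the denominator (full non-crossing mass) of the conditional probability. The floor is handled by a reflection argument for each coordinate: since on $\calS_k(\eta,L)$ we have $v_k>1+M\lambda^{-k/3}\geq f(-\gamma)+1$, every reflection correction is a bounded multiplicative factor that again cancels between numerator and denominator. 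Finally, choosing a target $\bar v\in\calS_k(3\eta,L-1)$ strictly inside $\calS_k$ and an $\varepsilon$-tube around the linear interpolation $x_i\mapsto\bar v_i$ yields an explicit positive lower bound for $c_1$.

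The main obstacle will be uniformity in the degenerate regimes: coordinates of $\ux$ coalescing, $x_k$ touching the floor, and $f$ varying over the non-compact family $\calF_M^k$. The Karlin--McGregor and reflection cancellations dispatch the first two degeneracies explicitly. For uniformity in $f$, I would invoke Lemma~\ref{lem:SD-CH} to monotonically sandwich $f$ between $0$ and the constant $\lambda^{-k/3}M$, reducing the problem to a compact finite-dimensional minimization; a continuity-extension argument across the closure of $\bbA_k^+(M)$ then produces the uniform lower bound $\chi>0$ claimed in the lemma.
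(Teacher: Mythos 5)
Your overall plan is sound and amounts to a genuine (and somewhat shorter) variant of the paper's argument, but one of the technical claims as stated is incorrect and needs repair.

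The structural reduction you perform in the first paragraph is clean and correct: the conditional density of $\bigl(\uX(-\gamma),\uX(\gamma)\bigr)$ is a product of two one-sided factors coupled only through $q_{2\gamma}(\uv,\uw)$, and since this middle heat kernel is bounded above globally and below on $\calS_k\times\calS_k$, the problem decouples into two marginal lower bounds. This is a mild simplification of the paper, which instead steers in two stages (first control $X_k$ and $X_1$ at $\pm(\gamma+1)$ via a monotone decomposition, then steer into $\calS_k$ at $\pm\gamma$ in Lemma~\ref{lem:good_points}); your version collapses these into one step. The use of the sandwich $\Omega^{M\lambda^{-k/3}}\subset\Omega^f\subset\Omega^0$ to absorb uniformity in $f$ is also exactly the mechanism the paper exploits (as in \eqref{eq:lem_2.2_1}), even though you attribute it to Lemma~\ref{lem:SD-CH} rather than to simple inclusion of events --- stochastic domination alone would not help because $\calS_k(\eta,L)$ is not monotone.

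The problematic step is the treatment of the floor in the one-sided marginal. You invoke the Karlin--McGregor determinant $\det(q_2(x_i,v_j))$, which is the image formula for the type $A_{k-1}$ Weyl chamber (pure non-crossing, no floor), and you then assert that the floor only contributes ``a bounded multiplicative factor that cancels.'' This is not right: the conditioned endpoint density in the floored chamber $\bbA_k^+$ is the type $C_k$ image formula (a signed-permutation sum, not coordinatewise reflection), and the leading degenerate factor as $\ux$ approaches the boundary is the full harmonic function $U(\ux)=\prod_j x_j\,\prod_{i<j}(x_i^2-x_j^2)$, not merely the Vandermonde $\prod_{i<j}(x_i-x_j)$. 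In particular, when $x_k$ is close to the floor the density degenerates like $\prod_j x_j$; this is \emph{not} bounded below, and in fact it is precisely the quantity that must cancel against the same degeneracy in the normalizing mass. Calling it bounded and claiming it cancels are in tension with each other, and the continuity-extension step at the end of your argument silently relies on this cancellation being carried out for the full $U$-factor, not just its Vandermonde piece. The paper sidesteps this by quoting Varopoulos' two-sided heat kernel estimate \eqref{Harnack_principle} for the cone $\bbA_k^+$ --- which already packages the entire $U(\ux)U(\uz)$ degeneracy --- and, in addition, by first steering the boundary data away from the floor (to $x_k\ge 1+M\lambda^{-k/3}$) before performing the final steering into $\calS_k$ in Lemma~\ref{lem:good_points}, so that the second step never sees the floor degeneracy at all. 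If you replace the type-$A$ determinant with the correct $C_k$ image formula (or simply cite \eqref{Harnack_principle} directly) and phrase the floor contribution as a degenerate factor that cancels, your one-step route closes the argument.
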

}
\subsection{Proof of Lemma~\ref{lem:gaps} }
Since $f(t)\le \lambda^{-k/3}M$ for all $t\in[-\gamma,\gamma]$, and $\ux,\uy\in\calS_k (\eta, L )$:
$$
{\mathbf B}^{\ux, \uy}_{\gamma} \lb \Omega_{k ;\gamma}^f\rb \geq
{\mathbf B}^{\ux, \uy}_{\gamma} \lb \Omega_{k ;\gamma}^{\lambda^{-k/3}M}\rb.
$$
Let $p_{S,T}(\ux,\uz)$ denote the density of the measure ${\mathbf P}^{\ux}_{S,T} \lb \uX(T)\in d\uz;\,\Omega^+_{k,S,T} \rb$.
Then,  by the translation invariance and by the simmetry of the Brownian motion,
\begin{align*}
{\mathbf B}^{\ux, \uy}_{\gamma} \lb \Omega_{k ;\gamma}^{\lambda^{-k/3}M}\rb
&=\int_{\bbA^+_k}p_{0,\gamma}\lb\ux -\underline{\lambda^{-k/3}M},\uz\rb p_{0,\gamma}\lb\uy -\underline{\lambda^{-k/3}M},\uz\rb d\uz\\
&\ge\int_{\bbA^+_k(L)}p_{0,\gamma}\lb\ux - \underline{\lambda^{-k/3}M},\uz\rb p_{0,\gamma}\lb\uy -\underline{\lambda^{-k/3}M},\uz\rb d\uz.
\end{align*}  
{Above we use $\uc = \lb c, c,\dots , c\rb$ for the $k$-tupple with constant entries $c= \lambda^{-k/3}M$.}

{Fix $t_0 >0$. }
According to (0.3.2) in \cite{Varopoulos1999}, 
{
for any $L$  there exists a non-decreasing function $t\mapsto C(t,L)$ on 
$[t_0 , \infty)$
}
 such that
\begin{equation}
\label{Harnack_principle}
\frac{1}{C(t , L)}U(\ux)U(\uz)\le p_{0,t}(\ux,\uz)\le C(t, L)U(\ux)U(\uz),\quad 
x_1,z_1<L.
\end{equation}
where
$$
U(\ux):=\prod_{j=1}^k x_j\prod_{i<j}(x_i^2-x_j^2)
$$
is the positive  harmonic function in $\bbA^+_k$.  It is then clear that if $\ux\in \calS_k (\eta, L )$ then
$$
U\lb\underline{x}-\underline{\lambda^{-k/3}M}\rb\ge (2\eta)^{k(k-1)/2}.
$$
Therefore,
$$
{\mathbf B}^{\ux, \uy}_{\gamma} \lb \Omega_{k ;\gamma}^f\rb \geq
\frac{1}{C^2(\gamma, L)}(2\eta)^{k(k-1)}\int_{\bbA^+_k(L)}U^2(\uz)d\uz
$$
and the proof is complete.
\subsection{Proof of Lemma~\ref{lem:good_points-g}}
\label{sub:lem2.6}
The proof comprises two steps: 

\bigskip
\noindent
\step{1} {\bf Control of $X_{k}\lb\pm (\gamma +1)\rb$.} 
One first checks that there exists $\epsilon = \epsilon (\gamma, M, k ) >0$, such that for all $f\in \calF_M^k $, 
\be{eq:chechXk} 
\inf_{\ux, \uy \in \bbA_k^+ ( M)} 
{\mathbf B}_{\ell, r}^{\ux, \uy} 
\lb {L> X_1 (\pm (\gamma +1))>} X_k (\pm (\gamma +1)) \geq  1+M\lambda^{-k/3} ~\big| ~ \Omega_{[\ell,r]\setminus [-\gamma , \gamma]}^{f}\rb 
\geq \epsilon\,.
\ee
Let us observe that if $L$ is sufficiently large, then the bound 
\eqref{eq:chechXk} is a consequence of
\be{eq:chechXk1} 
\inf_{\ux, \uy \in \bbA_k^+ ( M)}
{\mathbf B}_{\ell, r}^{\ux, \uy} \lb 
{ X_1 (\pm (\gamma +1))>}
X_k (\pm (\gamma +1)) \geq 1+ M\lambda^{-k/3} ~\big| ~
{ 
	\Omega_{[\ell,r]\setminus [-\gamma , \gamma]}^{{0}}
}
\rb 
\geq \epsilon .
\ee
Indeed, set $A = \lbr L> X_1 (\pm (\gamma +1))\rbr$ and 
$B =\lbr X_k (\pm (\gamma +1)) \geq 1+ M\lambda^{-k/3}\rbr$. 
Both $A^{\sfc}$ and $B$ are monotone non-decreasing. Hence, Lemma~\ref{lem:SD-CH} implies that  
\be{eq:mon-chain} 
{\mathbf B}_{\ell, r}^{\ux, \uy} 
\lb AB ~\big| ~ \Omega_{[\ell,r]\setminus [-\gamma , \gamma]}^{f}\rb
 \geq {\mathbf B}_{\ell, r}^{\ux, \uy} 
\lb B ~\big| ~ \Omega_{[\ell,r]\setminus [-\gamma , \gamma]}^{0}\rb 
- 
{\mathbf B}_{\ell, r}^{\ux, \uy} 
\lb A^{c} ~\big| ~ \Omega_{[\ell,r]\setminus [-\gamma , \gamma]}^{M\lambda^{-k/3}}\rb .
\ee
Moreover, 
for any $\epsilon>0$,
\begin{equation}
\label{eq:chechX1}
{\mathbf B}_{\ell, r}^{\ux, \uy} \lb
A^c\,\big|\, 
{\Omega_{[\ell,r]\setminus [-\gamma , \gamma]}^{M\lambda^{-k/3}}}
\rb<{\epsilon/2}
\end{equation}
for all $L$ large enough, uniformly in $\ux,\uy\in\bbA_k^+(M)$. 
The estimate \eqref{eq:chechX1} follows from \eqref{eq:max1b} and the observation that, by monotonicity \eqref{eq:FKG-CH}, 
\[
{\mathbf B}_{\ell, r}^{\ux, \uy} \lb
X_1 (\pm (\gamma +1)) \geq L\,\big|\, 
{\Omega_{[\ell,r]\setminus [-\gamma , \gamma]}^{M\lambda^{-k/3}}
} 
\rb
\leq 
{\mathbf B}_{\ell, r}^{\ux, \uy} \lb
X_1 (\pm (\gamma +1)) \geq L\,\big|\, 
{\Omega_{[\ell , r]}^{M\lambda^{-k/3}}
} 
\rb.
\]
The above shows that \eqref{eq:chechXk1} implies \eqref{eq:chechXk} with $\epsilon$ replaced by $\epsilon/2$. 
 
 \smallskip

	\eqref{eq:chechXk1}  will be derived in Subsection~\ref{sub:refined-X} below. 
	 \qed
\bigskip 

\noindent\step{2}
Once  bottom boundary conditions at $\pm (\gamma +1)$ satisfy $x_k, y_k \geq  1+ M\lambda^{-k/3} $ 
{
and
$x_1,y_1<L$, 
}
 one can steer into
$\calS_k (\eta, L )\times\calS_k(\eta, L )$ at $\pm\gamma$. 
\begin{lemma} 
\label{lem:good_points}
	Recall \eqref{eq:f1f}. Fix ${L}$ and $\eta$ 
	{such that $L > 1+ M\lambda^{-k/3} + k\eta $, in particular such that $\calS_k (\eta, L )$ has a positive Lebesgue measure.}
There exists $\chi = \chi (k ,L , \eta ) >0 $ such that 
\be{eq:calS-steer} 
{\mathbf B}_{\gamma +1}^{\ux ,\uy}\lb \lb \uX (-\gamma), \uX (\gamma)\rb \in \calS_k (\eta, L )\times\calS_k (\eta, L )~| ~
{
\Omega_{[-\gamma-1 , \gamma+1]\setminus [-\gamma , \gamma]}^{f}
}
 \rb \geq \chi 
\ee	
for all $f\in \calF_M^k $ and for all $\ux , \uy \in \bbA_k^+$ satisfying $x_k , y_k \geq  1+ M\lambda^{-k/3}$ and
$x_1,y_1<L$. 
\end{lemma}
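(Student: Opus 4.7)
My plan is to exploit the spatial Markov property of the Brownian bridge at the two times $\pm\gamma$, so that the conditional probability in \eqref{eq:calS-steer} becomes a ratio of integrals in the endpoint values $(\uu,\uv):=(\uX(-\gamma),\uX(\gamma))$, and then to bound both integrals by matching two-sided Varopoulos Harnack estimates exactly as in the proof of Lemma~\ref{lem:gaps}. Setting $\underline{c}:=M\lambda^{-k/3}\underline{1}$, $\tilde\ua:=\ua-\underline{c}$, and $P^f_1(\ua,\ub):={\bf \Gamma}^{\ua,\ub}_{0,1}[\Omega^f_{0,1}]$ for the probability that $k$ independent unit-length Brownian bridges from $\ua$ to $\ub$ remain ordered and above $f$, the Markov property expresses \eqref{eq:calS-steer} as $N(\ux,\uy)/D(\ux,\uy)$ with
\[
N=\int\!\!\int_{\calS_k(\eta,L)^2}\Phi\,d\uu\,d\uv,\qquad D=\int\!\!\int_{(\bbA_k^+)^2}\Phi\,d\uu\,d\uv,
\]
and $\Phi=\bigl[\prod_{i=1}^k q_1(x_i,u_i)q_{2\gamma}(u_i,v_i)q_1(v_i,y_i)\bigr]P^f_1(\ux,\uu)P^f_1(\uv,\uy)$.

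For the lower bound on $N$, I would fix an interior target $\underline u^*,\underline v^*\in\calS_k(\eta,L)$ with coordinate gaps at least $2\eta$ (possible by $L>1+M\lambda^{-k/3}+k\eta$) and restrict to small $\ep$-balls around them. Monotonicity in the floor together with translation by $\underline{c}$ gives $P^f_1(\ux,\uu)\geq P^0_1(\tilde\ux,\tilde\uu)$, which by \eqref{Harnack_principle} is at least $c\,U(\tilde\ux)U(\tilde\uu)/\prod_iq_1(\tilde x_i,\tilde u_i)$. Since $U(\tilde\uu)$ and the Gaussian factors $q_1,q_{2\gamma}$ are bounded below on the target balls, this yields $N\geq c_1\,U(\tilde\ux)U(\tilde\uy)$ with $c_1>0$ uniform in $\ux,\uy,f$. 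The matching upper bound on $D$ is obtained by combining the Varopoulos upper bound \eqref{Harnack_principle} on the bounded sub-region $\{u_1\vee v_1\leq R\}$, which contributes at most $C_2\,U(\tilde\ux)U(\tilde\uy)$, with a tail estimate on the complement; for the tail one invokes the Karlin--McGregor determinantal representation of $p^{NI,+}_1$ and a Vandermonde-type Taylor expansion around a common reference point, which extracts a factor $\prod_{i<j}(\tilde x_i-\tilde x_j)\asymp U(\tilde\ux)$ (resp.~$U(\tilde\uy)$) from the left (resp.~right) side, leaving an integrable polynomial-times-Gaussian remainder that can be made arbitrarily small by taking $R$ large. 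This gives $D\leq 2C_2\,U(\tilde\ux)U(\tilde\uy)$ and hence $\chi:=c_1/(2C_2)>0$, independent of $\ux,\uy$ satisfying the hypotheses of the lemma and of $f\in\calF_M^k$.

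The main obstacle is precisely the small-gap regime, in which $\ux$ or $\uy$ approaches the boundary of $\bbA_k^+$ and $U(\tilde\ux)U(\tilde\uy)\to 0$: both $N$ and $D$ then vanish at the same rate, and only the matching of the two-sided Varopoulos estimates---together with the Taylor-extracted $U$-factor that preserves the $U(\tilde\ux)U(\tilde\uy)$ structure in the tail contribution to $D$---yields the uniform positive lower bound on the ratio.
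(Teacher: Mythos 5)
Correct, and essentially the same route as the paper: you express the conditional probability as a ratio $N/D$ of integrals over the endpoint coordinates $(\uX(-\gamma),\uX(\gamma))$, lower-bound $N$ via the Varopoulos two-sided estimate \eqref{Harnack_principle} applied to the shifted bridge kernel (as in \eqref{eq:DenomForA}--\eqref{eq:lem_2.2_3}), and upper-bound $D$ by the matching $U(\ux)U(\uy)$ expression (as in \eqref{eq:lem_2.2_2}), with the hypotheses $x_k,y_k\geq 1+M\lambda^{-k/3}$ and $x_1,y_1<L$ making both Harnack sides comparable to $\prod_{i<j}(x_i-x_j)\prod_{i<j}(y_i-y_j)$ up to $(k,L)$-dependent constants, which is exactly the crux you identify. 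The one place you do more work than the paper is the denominator: you decompose the $\uu,\uv$-integral and handle the tail $\{u_1\vee v_1>R\}$ via Karlin--McGregor, whereas the paper simply invokes the global two-sided bound \eqref{eq:UB-Om-hat} as a ``straightforward'' consequence of \eqref{Harnack_principle}; your detour is a legitimate way to justify that step, though it could be streamlined.
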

\begin{proof}
Set, for brevity
$$
{
A = A(\eta , L)
}
:=\left\{
\lb \uX (-\gamma), \uX (\gamma)\rb \in \calS_k (\eta, L )\times\calS_k (\eta, L )
\right\}\ 
\text{and {$\widehat\Omega_{\gamma}^f := 
	\Omega_{[-\gamma-1 , \gamma+1]\setminus [-\gamma , \gamma]}^{f}$}}
$$
Then, {since $0\leq f \leq M\lambda^{-k/3}$}, 
\begin{align}
\label{eq:lem_2.2_1}
{\mathbf B}_{\gamma +1}^{\ux ,\uy}\left(A\big|
{\widehat\Omega_{\gamma}^f}
\right)
&=\frac{{\mathbf B}_{\gamma +1}^{\ux ,\uy}\left(A ;
{\widehat\Omega_{\gamma}^f	}
	\right)}
{{\mathbf B}_{\gamma +1}^{\ux ,\uy}\left(
\widehat\Omega_{\gamma}^f	
\right)}
\ge \frac{{\mathbf B}_{\gamma +1}^{\ux ,\uy}\left(A ; 
	\widehat\Omega_{\gamma}^{M\lambda^{-k/3}}
	\right)}
{{\mathbf B}_{\gamma +1}^{\ux ,\uy}\left(
\widehat\Omega_{\gamma}^0	
\right)}.
\end{align}
{
	It is straightforward to check that  \eqref{Harnack_principle}   implies the following 
upper bound on ${\mathbf B}_{\gamma +1}^{\ux,\uy}\left(    \widehat\Omega_{\gamma}^0\right)$: There exists 
$\widehat{C} = \widehat{C}(\gamma , L)$ such that 
\be{eq:UB-Om-hat} 
\tfrac{U(\ux)U(\uy)}{\widehat{C}(\gamma , L)} \leq 
{\mathbf B}_{\gamma +1}^{\ux,\uy}\left(    \widehat\Omega_{\gamma}^0\right)
\le \widehat{C}(\gamma , L)U(\ux)U(\uy), 
\ee
 uniformly in 
$\ux , \uy \in \bbA_k^+ (L)$. 
}
Furthermore, for every $\ux$ with $x_1<L$ we have
$$
U(\ux)<L^k(2L)^{k(k-1)/2}\prod_{i<j}(x_i-x_j).
$$
This implies that
{
\begin{equation}
\label{eq:lem_2.2_2}
{\mathbf B}_{\gamma +1}^{\ux ,\uy}\left(
\widehat\Omega_{\gamma}^0
\right)
\le \widehat{C}(\gamma , L)(2L)^{k(k+1)}\prod_{i<j}(x_i-x_j)\prod_{i<j}(y_i-y_j).
\end{equation}
}

For the probability in the numerator in  the right-most 
expression in  \eqref{eq:lem_2.2_1} we have 
\begin{align}
\label{eq:DenomForA}
{\mathbf B}_{\gamma +1}^{\ux ,\uy}\left(A ; 
\widehat{\Omega}_{\gamma}^{\lambda^{-k/3}M}\right)
=\int_{\calS_k (\eta, L )}\int_{\calS_k (\eta, L )} p_{0,1}(\underline{\tilde x},\underline{\tilde u})
{q_{2\gamma}}
(\underline{\tilde u},\underline{\tilde v}) p_{0,1}(\underline{\tilde v},\underline{\tilde y})
d\underline{u}d\underline{v},
\end{align}
where $\underline{\tilde z}:=\underline{z}-\underline{\lambda^{-k/3}M}$,
$q_{2\gamma} (\underline{\tilde u},\underline{\tilde v})$ is the unconstrained  quantity defined in 
\eqref{eq:q-uxuy02}, and $p_{0,1}$ is the density defined before \eqref{Harnack_principle}. 

Since $\underline{u}$, $\underline{v}$ are in $\calS_k (\eta, L ){\subset \bbA_k (L )} $, 
$$
q_{2\gamma}(\underline{\tilde u},\underline{\tilde v})\ge
\tfrac1{\lb 4\pi\gamma\rb^{k/2}}\,{\rm e}^{-  \frac{kL^2}{4\gamma}}.
$$ 
as it readily follows from \eqref{eq:q-xy}.
Consequently,
\begin{align*}
{\mathbf B}_{\gamma +1}^{\ux ,\uy}\left(A ;
\widehat\Omega_{\gamma}^{M\lambda^{-k/3}}
\right)
\ge \frac{(2\eta)^{k(k-1)}{{\rm e}^{-  \frac{kL^2}{4\gamma}}}}{{\lb 4\pi\gamma\rb^{k/2}}}
\int_{\calS_k (\eta, L )} p_{0,1}(\underline{\tilde x},\underline{\tilde u})d\underline{u}
\int_{\calS_k (\eta, L )}p_{0,1}(\underline{\tilde v},\underline{\tilde y})d\underline{v}.
\end{align*}
Using \eqref{Harnack_principle} once again, we obtain
\begin{align*}
\int_{\calS_k (\eta, L )} p_{0,1}(\underline{\tilde x},\underline{\tilde u})d\underline{u}
\ge\frac{1}{C(\gamma, L)}U(\underline{\tilde x})\int_{\calS_k (\eta, L )}U(\underline{\tilde u})d\underline{u}.
\end{align*}
We have already shown that $U(\underline{\tilde u})\ge (2\eta)^{k(k-1)/2}$ for $\underline{u}\in\calS_k (\eta, L )$.
Furthermore, if  $x_k>1+\lambda^{-k/3}M$ then
$$
U(\underline{\tilde x})\ge \prod_{i<j}(x_i-x_j).
$$
Therefore
$$
\int_{\calS_k (\eta, L )} p_{0,1}(\underline{\tilde x},\underline{\tilde u})d\underline{u}
\ge\frac{1}{C(\gamma, L)} (2\eta)^{k(k-1)/2} \prod_{i<j}(x_i-x_j)
$$
and, consequently,
\begin{align}
\label{eq:lem_2.2_3}
{\mathbf B}_{\gamma +1}^{\ux ,\uy}\left( A; \widehat{\Omega}_{ \gamma +1}^{M\lambda^{-k/3}}\right)
\ge 
{ 
\frac{(2\eta)^{k(k-1)}{{\rm e}^{-  \frac{kL^2}{4\gamma}}}}{{\lb 4\pi\gamma\rb^{k/2}}
C^{{2}}(\gamma, L)}
}
\prod_{i<j}(x_i-x_j)\prod_{i<j}(y_i-y_j).
\end{align}
Plugging \eqref{eq:lem_2.2_2} and \eqref{eq:lem_2.2_3} into \eqref{eq:lem_2.2_1}, we get the desired estimate.
\end{proof}
\bibliographystyle{plain}

\subsection{
	{Proof of \eqref{eq:chechXk1}}}
\label{sub:refined-X}
By the Markov property,  
\eqref{Harnack_principle} 
{ and 
\eqref{eq:UB-Om-hat} 
},
\begin{align*}
&{\mathbf B}_{\ell, r}^{\ux, \uy} \lb
X_k (\pm (\gamma +1)) \geq 1+ M\lambda^{-k/3}, 
{
\Omega_{[\ell,r]\setminus [-\gamma , \gamma]}^{{0}}
}
\rb\\
&\ge\int_{ \calS_k(\eta, {2M})\times \calS_k(\eta, {2M})}p_{\ell,-\gamma-1}(\ux,\underline{u})
{
{\mathbf B}_{\gamma +1}^{\uu,\uv}\left(    \widehat\Omega_{\gamma}^0\right)
}
p_{\gamma+1,r}(\underline{v},\uy)d\underline{u}d\underline{v}\\
&\ge
{
\frac{1}{C^2({2}, {2M}) \widehat{C}(\gamma , {2M})}
}
 U(\ux)U(\uy)
\int_{ \calS_k(\eta, {2M})\times \calS_k(\eta, {2M})}U^{{2}}(\underline{u})U^{{2}}(\underline{v})d\underline{u}d\underline{v}\\
&\, {=:} \,C_1(\gamma,\eta, {M})U(\ux)U(\uy).
\end{align*}
Furthermore, {using upper bounds in 
	\eqref{Harnack_principle} 
	 and 
		\eqref{eq:UB-Om-hat}, we infer that 
		there exists $C_2(\gamma,M)$ such that
$$
{\mathbf B}_{\ell, r}^{\ux, \uy} \lb 
\Omega_{[\ell , r]\setminus [-\gamma , \gamma]}^{{0}}
\rb = 
\int p_{\ell,-\gamma-1}(\ux,\underline{u})
{
	{\mathbf B}_{\gamma +1}^{\uu,\uv}\left(    \widehat\Omega_{\gamma}^0\right)
}
p_{\gamma+1,r}(\underline{v},\uy)d\underline{u}d\underline{v}
\le C_2(\gamma,M)U(\ux)U(\uy).
$$
}
{
Therefore,  
\be{eq:chechXk1-1} 
\inf_{\ux, \uy \in \bbA_n^+ ( M)}
{\mathbf B}_{\ell, r}^{\ux, \uy} \lb 
X_k (\pm (\gamma +1)) \geq 1+ M\lambda^{-k/3} ~\big| ~
{ 
	\Omega_{[\ell, r]\setminus [-\gamma , \gamma]}^{{0}}
}
\rb 
\geq \chi_1
\ee
 holds with $\chi_1 :=C_1(\gamma,\eta, {M})/C_2(\gamma, M)$.
}
This concludes  the proof of  \eqref{eq:chechXk1}. 

\section{Limiting line ensemble}
\label{sec:line-ens}
In this section we prove Theorem \ref{thm:coarse1}  and Theorem \ref{thm:coarse2}.
The key technical tool will be a uniform control of the minimal gaps between lines.
\subsection{Minimal gaps}
As in \cite{corwinhammond}, for the purpose of analyzing the limiting line ensemble, an important quantity to be controlled is the minimal
gap between the paths, defined as
\be{eq:gkg1}
g^{k,\gamma}(\uX):= \min_{1\leq i\leq k-1}\inf_{s\in[-\gamma,\gamma]}|X_{i+1}(s)-X_i(s)|.
\ee
The main  result concerning this quantity is the following   
\begin{proposition} 
\label{prop:gkg-control}
For any $\ep>0$, there exists $\delta>0$ such that 
\be{eq:gkg2}
\sup_{T\geq\gamma+2,\,n>k}
\mu^{k , \gamma}_{n ;T}\left(
g^{k,\gamma}(\uX)\leq \delta\right)\leq \ep
 \ee
The estimate above holds as well for the zero boundary distribution $ \mu^{0,k , \gamma}_{n ;T}$. 
\end{proposition}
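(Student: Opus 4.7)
The plan is to mirror the reduction scheme of the proof of Proposition \ref{prop:mkg-control}. Using Lemma \ref{lem:contr_max}, the Brownian-Gibbs property, the area-tilt Radon--Nikodym bound $C(M)$ and the steering Lemma \ref{lem:good_points-g}, the estimate of $\bbP_{n;T}(E_\delta)$ with $E_\delta := \{g^{k,\gamma}(\uX)\le\delta\}$ reduces, up to an error $\ep/2$, to a Brownian-bridge bound of the form
\[
\sup_{\ux,\uy\in\bbA_k^+(M)}{\mathbf B}^{\ux,\uy}_{k;\ell,r}\bigl(E_\delta\cap \Omega^{f,M}_{k;\gamma}\,\big|\,\Omega^f_{[\ell,r]\setminus[-\gamma,\gamma]}\bigr) \;\le\; \beta^{-1}\,C(k,M,\gamma)\,\delta,
\]
with $\beta(k,M,\gamma)>0$ lower-bounding the denominator via Lemma \ref{lem:good_points-g}. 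Crucially, unlike for the modulus-of-continuity event, the bare sup $\sup_{\uv,\uw\in\bbA_k^+(M)}{\bf \Gamma}^{\uv,\uw}_{k,\gamma}(E_\delta)$ does \emph{not} tend to $0$ as $\delta\downarrow 0$, because boundary values with $v_i-v_{i+1}$ of order $\delta$ already force $g^{k,\gamma}(\uX)\le\delta$ near $\pm\gamma$. One must therefore integrate against the marginal density of $(\uX(-\gamma),\uX(\gamma))$ and exploit the Vandermonde suppression built into the Harnack bound \eqref{Harnack_principle}.

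By the spatial Markov property, I would write
\[
{\mathbf B}^{\ux,\uy}_{k;\ell,r}\bigl(E_\delta\cap \Omega^{f,M}_{k;\gamma}\,\big|\,\Omega^f_{[\ell,r]\setminus[-\gamma,\gamma]}\bigr) \;=\; \int \rho(\uv,\uw)\,{\bf \Gamma}^{\uv,\uw}_{k,\gamma}\bigl(E_\delta\cap \Omega^{f,M}_{k;\gamma}\bigr)\,d\uv\, d\uw,
\]
where $\rho(\uv,\uw)$ is the conditional density of $(\uX(-\gamma),\uX(\gamma))$. Estimate \eqref{Harnack_principle} gives $\rho(\uv,\uw)\lesssim U(\uv)U(\uw)$ uniformly on $\bbA_k^+(M)$. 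The pointwise bound on the ${\bf \Gamma}$-factor is obtained pair by pair: for each adjacent $i\in\{1,\dots,k-1\}$ the gap $Y := X_i-X_{i+1}$ is, before the ordering conditioning, a $\sqrt 2$-Brownian bridge on $[-\gamma,\gamma]$ from $v_i-v_{i+1}$ to $w_i-w_{i+1}$, and the classical reflection-principle identity yields
\[
\bbP\bigl(\inf_{[-\gamma,\gamma]}Y\le\delta\,\big|\,\inf Y>0\bigr) \;\le\; \frac{C(\gamma)\,\delta}{(v_i-v_{i+1})\wedge(w_i-w_{i+1})}
\]
whenever both boundary gaps are positive. The FKG inequality of Lemma \ref{lem:SD-CH} applied to the monotone event $\{\inf Y\le\delta\}^c$ is then used to transfer this single-gap bound to the full joint ordering conditioning $\Omega^+_{k,\gamma}$.

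Integrating the pair-wise reflection bound against $\rho$ then yields the desired estimate: the singular factor $1/[(v_i-v_{i+1})\wedge(w_i-w_{i+1})]$ is absorbed by the Vandermonde factors $\prod_{i<j}(v_i^2-v_j^2)$ and $\prod_{i<j}(w_i^2-w_j^2)$ present in $U(\uv)U(\uw)=\prod_j v_j w_j \prod_{i<j}(v_i^2-v_j^2)(w_i^2-w_j^2)$, so the weighted integral is $O(\delta)$ with a constant depending only on $k,M,\gamma$; a union bound over the $k-1$ adjacent pairs closes the estimate and choosing $\delta$ small concludes. The main obstacle is precisely this Vandermonde-weighted integrability argument, which requires combining the Harnack upper bound on $\rho$, the single-pair reflection estimate on the gap probability, and the FKG reduction from joint to single-pair conditioning; each ingredient is classical, but matching the constants so that the vanishing of the density on $\{v_i=v_{i+1}\}$ exactly cancels the singularity of the reflection bound at the same locus, uniformly in $\uv,\uw\in\bbA_k^+(M)$, is the delicate technical step of the whole proof.
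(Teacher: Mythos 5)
Your reduction to the tilt-free setting and your diagnosis that $\sup_{\uv,\uw}{\bf \Gamma}^{\uv,\uw}_{k,\gamma}(E_\delta)$ does not vanish as $\delta\downarrow 0$ are both correct, but the step you yourself flag as the delicate one is where the argument breaks. First, the transfer from the single-pair estimate to the joint ordering conditioning cannot be done with Lemma~\ref{lem:SD-CH}: the event $\{\inf_t (X_i(t)-X_{i+1}(t))>\delta\}$ is increasing in $X_i$ but decreasing in $X_{i+1}$, hence it is \emph{not} monotone for the partial order $\prec$, and the stochastic domination of Lemma~\ref{lem:SD-CH} says nothing about it. Second, even granting such a transfer, your single-pair bound $C(\gamma)\delta/[(v_i-v_{i+1})\wedge(w_i-w_{i+1})]$ is asserted for the gap conditioned only on its own positivity, whereas what is needed is the corresponding estimate under the full conditioning on $\Omega^{f,M}_{k;\gamma}$, where the neighbouring paths act as a random floor and ceiling for the pair $(X_i,X_{i+1})$; the reflection-principle identity you invoke does not survive that conditioning. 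So the Vandermonde-cancellation scheme, while plausible in outline, is not actually carried out and its two load-bearing steps are unjustified as stated.

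The paper's proof sidesteps both difficulties by never normalizing over the ordering event on $[-\gamma,\gamma]$. By \eqref{eq:targBound5} one has ${\mathbf B}^{\ux,\uy}_{k;\ell,r}(\Omega^f_{k;\ell,r}) \ge \beta\, {\mathbf B}^{\ux,\uy}_{k;\ell,r}(\Omega^f_{k;[\ell,r]\setminus[-\gamma,\gamma]})$, so it suffices to bound ${\mathbf B}^{\ux,\uy}_{k;\ell,r}(\Omega^{f,M}_{k;\ell,r}\cap E)$ by $\nu(\delta)\,{\mathbf B}^{\ux,\uy}_{k;\ell,r}(\Omega^f_{k;[\ell,r]\setminus[-\gamma,\gamma]})$ with $\nu(\delta)\to0$; see \eqref{eq:A-lb}--\eqref{eq:A-up-E}. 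The numerator factorizes over the three time windows, and on $[-\gamma,\gamma]$ one only needs the \emph{unnormalized} product-bridge mass of $\bigcup_j\{\inf_t(X_j(t)-X_{j+1}(t))\in(0,\delta)\}$, which by a union bound and Brownian scaling reduces to a single-bridge quantity that the reflection principle bounds by $4\delta/(\pi\gamma)$ uniformly over \emph{all} endpoints $x,y>0$: because the measure is unnormalized, no singularity appears as the boundary gaps close, so neither the Vandermonde cancellation nor any FKG transfer is needed; the denominator is then handled by \eqref{eq:A-lb-P}. To rescue your route you would have to replace the FKG step by a resampling argument in the spirit of \cite[Proposition 5.6]{corwinhammond} and prove the conditional single-pair bound in the presence of a random floor and ceiling, which is substantially harder than the unnormalized comparison above.
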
 
We remark that  \eqref{eq:gkg2} shows that any limiting point
 of $\{\mu^{k , \gamma}_{n ;T}, \,T\geq \gamma+2, \,n>k\}$, must be concentrated on $\sfC\lb [-\gamma , \gamma ]; \bbA_k^+\rb$.  Since this holds for all $\gamma$ and $k$, it follows that if $\bbP$ is a limiting point of 
 the free or the zero boundary measures 
 as in Theorem \ref{thm:coarse1}, then $\bbP$ must be concentrated on $\sfC\lb \bbR, \bbA_\infty^+\rb$, that is $\bbP$ is non-intersecting.

\subsection{Proof of Proposition \ref{prop:gkg-control}}
In view of   the strategy which we have already employed to prove Proposition \ref{prop:mkg-control}, 
we will first reduce the problem to the case of Brownian measures without area tilts. Once this is accomplished, one could adapt the proof in \cite[Proposition 5.6]{corwinhammond} to obtain the desired result. However, below we will actually provide an alternative, perhaps more direct proof of the same estimate.  We provide the proof in the case of free boundary conditions only, since  the same argument applies with no modifications to the case of zero boundary conditions. 



Let $E=E(\delta, k , \gamma)$ denote the event 
\be{eq:E-gaps}
E = \{g^{k,\gamma}(\uX)\leq \delta\}.
\ee
As in \eqref{eq:est1}-\eqref{eq:est11} we may restrict ourselves to an upper bound on 
\[
\sup_{f\prec \lambda^{-k/3} M}\bbP_{n;T}\lb E\cap \Omega_{k, \gamma +2}^{f , {M}}\rb.
\]
Repeating the argument leading to \eqref{eq:targBound4}, it suffices to show that for all $\ep>0, M>0$ there exists $\delta>0$ such that 
 \be{eq:targBound40} 
 \sup_{f\in \calF_M^k }\ 
 \sup_{\ux, \uy \in \bbA_k^+ ( {M})} 
 {\mathbf B}_{k; \ell, r}^{\ux, \uy} 
 \lb  E \cap \Omega_{k; \ell, r}^{f , {M}}
 ~\big| ~ \Omega_{k; \ell, r}^f
 \rb \leq \ep,
 \ee
 where $\ell=-\gamma-2$ and $r=\gamma+2$. 

 	
	We now prove \eqref{eq:targBound40}. Fix $\gamma >0$, 
and $f\in \calF_M^k $. 
 	Fix boundary conditions  $\ux,\uy\in\bbA_k^+ ( M)$
 	satisfying  $x_k>f(\ell)$, $y_k>f(r)$. 
 	 By \eqref{eq:targBound5}, 
 	\be{eq:A-lb} 
 	{\mathbf B}_{k; \ell, r}^{\ux, \uy} \lb
 	\Omega_{k; \ell , r}^f\rb 
 	\geq \beta 
 	{\mathbf B}_{k; \ell, r}^{\ux, \uy}\lb 
 	\Omega_{k; [\ell, r]\setminus [-\gamma ,\gamma]}^{f}	
 	\rb 
 	\ee
 	Thus, it would be enough to check that  regardless of 
 	$f, \ux, \uy$ fixed as above, 
 	\be{eq:A-ub} 
 	{\mathbf B}_{k; \ell, r}^{\ux, \uy} \lb
 	\Omega_{k; \ell , r}^{f , M}\cap E \rb 
 	\leq \nu 
 	{\mathbf B}_{k; \ell, r}^{\ux, \uy}\lb 
 	\Omega_{k; [\ell, r]\setminus [-\gamma ,\gamma]}^{f}\rb 
 	\ee
 	where 
 	$E=E(\delta, k , \gamma)$ are  the events defined in 
 	\eqref{eq:E-gaps}, whereas $\nu = \nu (\delta, k , \gamma)$ 
 	satisfies: 
 	\be{eq:A-nu} 
 	\lim_{\delta\to 0} \nu (\delta, k , \gamma) = 0 .
 	\ee
 	Now, by \eqref{eq:calS-steer-g} there exists $\xi = \xi (k , \gamma ) > 0 $, such that 
 	\be{eq:A-lb-P} 
 	{\mathbf B}_{k; \ell, r}^{\ux, \uy}\lb 
 	\Omega_{k; [\ell, r]\setminus [-\gamma ,\gamma]}^{f}\rb 
 	\geq 
 	\xi \, 
 	{\mathbf P}_{k; \ell }^{\ux} \lb \Omega_{k; \ell  ,-\gamma}^{f}\rb 
 	\widehat{{\mathbf P}}_{k ; r}^{\uy} 
 	\lb \Omega_{k; \gamma ,r}^{f}\rb , 
 	\ee
 	where ${\mathbf P}_{k; \ell }^{\ux}$ denotes the measure associated to $k$ independent Brownian motions started at $\ux$ at time $\ell$ and $\widehat{{\mathbf P}}_{k ; r}^{\uy}$ is the 
	notation for the time-reversed $k$-tupple of independent Brownian motions started at 
 	time $r$ at $\uy$. 
 	
 	On the other hand, 
 	\be{eq:A-up-not} 
 	{\mathbf B}_{k; \ell, r}^{\ux, \uy} \lb
 	\Omega_{k; \ell , r}^{f , M}\cap E \rb \leq  
 	{\mathbf P}_{k; \ell }^{\ux} \lb \Omega_{k; \ell  ,-\gamma}^{f}\rb 
 	\widehat{{\mathbf P}}_{k ; r}^{\uy} 
 	\lb \Omega_{k; \gamma ,r}^{f}\rb 
 	\max_{\uu , \uv \in \bbA_k^+ (M )} 
 	{\mathbf B}_{k ;\gamma}^{\uu , \uv} \lb \Omega_{k; \gamma}^{0 }\cap E\rb
 	\ee
	If we show that  
 	\be{eq:A-up-E} 
 	\max_{\uu , \uv \in \bbA_k^+ (M )} 
 	{\mathbf B}_{k ;\gamma}^{\uu , \uv} \lb \Omega_{k; \gamma}^{0 }\cap E (\delta , k , \gamma )\rb \leq \nu  (\delta , k , \gamma ) , 
 	\ee
 	with $\nu$ satisfying \eqref{eq:A-nu}, then \eqref{eq:A-ub} will hold with $\nu^\prime = \nu/\xi$.
 	
 	We first note that 
 	\begin{align*}
 	{\mathbf B}_{k ;\gamma}^{\uu , \uv} \lb \Omega_{k; \gamma}^{0 }\cap E (\delta , k , \gamma )\rb
 	&\le\sum_{j=1}^{k-1}{\mathbf B}_{k ;\gamma}^{\uu , \uv} 
 	\lb \inf_{|t|\le\gamma}(X_j(t)-X_{j+1}(t))\in(0,\delta)\rb 	
 	\end{align*}
    Then, by the scaling property of the Brownian motion,
    \begin{align*}
    \max_{\uu , \uv \in \bbA_k^+ (M )} 
 	{\mathbf B}_{k ;\gamma}^{\uu , \uv} \lb \Omega_{k; \gamma}^{0 }\cap E (\delta , k , \gamma )\rb \leq (k-1)
 	\max_{x,y>0} {\mathbf B}_{2\gamma}^{x, y} 
 	\lb \inf_{|t|\le2\gamma}X(t)\in(0,
	\delta) \rb
    \end{align*}
    If $x\le\sqrt{2}\delta$ then
    \begin{align*}
    {\mathbf B}_{2\gamma}^{x, y} 
 	\lb \inf_{|t|\le2\gamma}X(t)\in(0,\delta) \rb
 	&\le \int_0^\infty {\mathbf P}_{-2\gamma,0}^{x}
 	\lb\inf_{t\in[-2\gamma,0]}X(t)>0, X(0)\in dz\rb
 	{\mathbf B}_{0,2\gamma}^{z, y}(1)\\
 	&\le\frac{1}{2\sqrt{\pi\gamma}}{\mathbf P}_{-2\gamma,0}^{x}
 	\lb\inf_{t\in[-2\gamma,0]}X(t)>0\rb
	\\&
 	\le\frac{x}{\sqrt 2\pi\gamma}\le\frac{\delta}{\pi\gamma},
    \end{align*}
    in the last step we have used the reflection principle.
    By the symmetry,
    \begin{align*}
    {\mathbf B}_{2\gamma}^{x, y} 
 	\lb \inf_{|t|\le2\gamma}X(t)\in(0,\delta) \rb
 	\le \frac{\delta}{\pi\gamma},
    \end{align*}
    for all $y\le\sqrt{2}\delta$.
    
    Consider now the remaining case $x,y>\sqrt{2}\delta$. Set 
    $$
    \tau=\inf\{t:X(t)=\sqrt{2}\delta\}.
    $$
    Then we have 
    \begin{align*}
    {\mathbf B}_{2\gamma}^{x, y} 
    \lb \inf_{|t|\le2\gamma}X(t)\in(0,\delta) \rb
    &={\mathbf B}_{2\gamma}^{x, y} 
 	\lb \inf_{|t|\le2\gamma}X(t)>0,\tau<2\gamma\rb\\
 	&={\mathbf B}_{2\gamma}^{x, y} 
 	\lb \inf_{|t|\le2\gamma}X(t)>0,\tau<0\rb
 	+{\mathbf B}_{2\gamma}^{x, y} 
 	\lb \inf_{|t|\le2\gamma}X(t)>0,\tau\in(0,2\gamma)\rb.
    \end{align*}
    By the strong Markov property and by the reflection principle,
    \begin{align*}
    {\mathbf B}_{2\gamma}^{x, y} 
 	\lb \inf_{|t|\le2\gamma}X(t)>0,\tau<0\rb
 	&=\int_{-2\gamma}^0 {\mathbf P}_{-2\gamma,0}^{x}\lb\tau\in ds\rb
 	{\mathbf B}_{s,2\gamma}^{\sqrt{2}\delta, y} 
 	\lb \inf_{t\in(s,2\gamma)}X(t)>0\rb\\
 	&\le\frac{2\delta}{\pi\gamma}{\mathbf P}_{-2\gamma,0}^{x}\lb\tau<0\rb.
    \end{align*}
    Therefore, by the symmetry,
    \begin{align*}
    {\mathbf B}_{2\gamma}^{x, y} 
 	\lb \inf_{|t|\le2\gamma}X(t)>0,\tau\in(0,2\gamma)\rb
 	\le\frac{2\delta}{\pi\gamma}.
    \end{align*}
    As a result,
    $$
    {\mathbf B}_{2\gamma}^{x, y} 
    \lb \inf_{|t|\le2\gamma}X(t)\in(0,\delta) \rb
    \le \frac{4\delta}{\pi\gamma}. 
    $$
    This completes the proof of \eqref{eq:targBound40}. 

\subsection{Proof of Theorem \ref{thm:coarse1}}
As already noted, the fact that $\bbP$ is non-intersecting follows from Proposition \ref{prop:gkg-control}. It is also immediate to check that the estimate on the maxima stated in the theorem follows from the control of  maxima established in Lemma \ref{lem:contr_max}. Notice that Lemma \ref{lem:contr_max} applies to the zero boundary case as well, as a consequence of the stochastic domination from Lemma \ref{lem:SD}.

Once we have the control of the minimal gaps from Proposition \ref{prop:gkg-control}, to prove the Brownian-Gibbs property of the limiting line ensemble we may use exactly the same coupling argument of \cite[Proposition 3.7]{corwinhammond}. Indeed, it is not hard to check that all the basic properties of Brownian bridges used in that argument can be extended  with only minor modifications to the case of Brownian bridges with area tilts. 

{ 
	The absolute continuity statement 
	 \eqref{eq:abs-cont} 
	follows by a literal repetition of arguments employed for the proof of 
Proposition~\ref{prop:mkg-control}
}

\subsection{Proof of Theorem \ref{thm:coarse2}}
In view of Theorem \ref{thm:tight-k}, we need only show convergence of finite dimensional distributions of
 $\bbP^0_{n:\ell,r}$ as $n\to\infty,\ell\to-\infty$, and $r\to\infty$. To this end, fix $m\in\bbN$, let $\calS=\{s_1,\dots,s_m\}\subset \bbR^m$, $\calI=\{i_1,\dots,i_m\}\in\bbN^m$, and let $\calT=\{t_1,\dots,t_m\}\in \bbR_+^m$. Consider the event
\[
E(\calS,\calI,\calT)=\{\uX\in\Omega:\; X_{i_j}(s_j) > t_j\,,\; j=1,\dots,m\}.
\]
It suffices to show that for each choice of $\calS,\calI,\calT$ there exists $u(\calS,\calI,\calT)$ such that
\be{eq:fdconv} 
 \bbP^0_{n:\ell,r}(E(\calS,\calI,\calT))\rightarrow u(\calS,\calI,\calT)\,
 \ee
as $n\to\infty,\ell\to-\infty$, and $r\to\infty$, regardless of the order of the limits. 
The main observation here is that, since the event $E(\calS,\calI,\calT)$ is increasing, the Brownian-Gibbs property on sub-intervals of $[\ell,r]$ and the stochastic domination from Lemma \ref{lem:SD} imply monotonicity  of the above probabilities:
\be{eq:fdmon} 
 \bbP^0_{n:\ell,r}(E(\calS,\calI,\calT))\leq  \bbP^0_{n',\ell',r'}(E(\calS,\calI,\calT))\,,
 \ee
 whenever $n'\geq n$, $r'\geq r$ and $\ell'\leq \ell$. Clearly, \eqref{eq:fdmon} implies \eqref{eq:fdconv}, and the proof of convergence is complete. 
 
 Concerning time-translation invariance, let $\calS_t=\{s_1+t,\dots,s_m+t\}$, $t\in\bbR$. Then, 
 \be{eq:fdconva} 
 \bbP^0_{n:\ell,r}(E(\calS_t,\calI,\calT)) = \bbP^0_{n:\ell-t,r-t}(E(\calS,\calI,\calT)) \,.
 \ee
Monotonicity \eqref{eq:fdmon} implies that 
 any two  sequences $(n^{(1)}_k,\ell^{(1)}_k,r^{(1)}_k), (n^{(2)}_k,\ell^{(2)}_k,r^{(2)}_k)$, such that $n^{(i)}_k\leq n^{(i)}_{k+1}$, $\ell^{(i)}_k\geq \ell^{(i)}_{k+1}$, $r^{(i)}_k\leq r^{(i)}_{k+1}$, $(n^{(i)}_k,\ell^{(i)}_k,r^{(i)}_k)\to (\infty,-\infty,\infty)$, $i=1,2$, must satisfy
 \be{eq:fdconvab} 
\lim_{k\to\infty} \bbP^0_{n^{(1)}_k;\ell^{(1)}_k,r^{(1)}_k}(E(\calS,\calI,\calT)) = \lim_{k\to\infty} \bbP^0_{n^{(2)}_k;\ell^{(2)}_k,r^{(2)}_k}(E(\calS,\calI,\calT))=u(\calS,\calI,\calT) \,.
 \ee
Therefore $\bbP^0_{n:\ell-t,r-t}(E(\calS,\calI,\calT))$ and $\bbP^0_{n:\ell,r}(E(\calS,\calI,\calT))$ have the same limit  
 \be{eq:fdconvt} 
 u(\calS_t,\calI,\calT)=u(\calS,\calI,\calT)\,,\qquad t\in\bbR,
 \ee
so that $\bbP^0$ is  invariant by time translations.

\subsection{{Directions of further research and open questions.}} 
\label{sub:open}
{
In this concluding Subsection we briefly summarize   open 
problems and future research directions. 
\subsubsection{Unicity, time-ergodicity  and structure of limiting line ensembles.} 
As it has been already mentioned 
Theorem~\ref{thm:coarse1} does not imply unicity of limiting 
line ensembles. There are two possible venues to address this issue,  
both require insights. Indeed, although there are no ready  determinantal formulas for finite dimensional distributions at hand (and hence, unlike \cite{corwinhammond} one cannot characterize the unique limiting ensemble from tightness considerations),  the simple geometric structure of tilts gives hope that there is an appropriate algebraic characterization to be uncovered. On the more analytic part, 
although mixing estimates of \cite{ioffevelenikwachtel} do not
directly apply in the case of random floors, one may hope that 
there is an appropriate adjustment, which would also imply ergodicity 
without resorting to exact solutions as in 
\cite{corwin2014ergodicity}. 
}

\subsubsection{{Scaling limits as $\lambda\downarrow 1$.}} 
{
It would be interesting to explore, how this scaling regime is related to the scaling regimes described in \cite{Bornemann}. 
}

\subsubsection{{Random walks and level lines ensembles}.}
{As it was stated in the very beginning, the original motivation was to understand the large scale structure of level lines of low temperature $(2+1)$-dimensional SOS interfaces above a hard wall \cite{caputoetal2014,caputoetal2016,IV2016,lacoin2017wetting}.  
	A natural intermediate step is to study ensembles of ordered random walks under
properly normalized  geometric area tilts, and to recover limiting line ensembles in the 
 $1:2:3$ diffusive rescaling.  In view of unbounded number of paths and in view of geometrically growing tilts such an endeavor clearly requires insights beyond \cite{ioffevelenikwachtel}. In the case of level lines one has to employ renormalization procedures of e.g. 
 \cite{ioffe2008ballistic,ioffeshlosmantoninelli}  for an effective finite scale description of level lines in terms of random walks. 
 Growing  number of macroscopic level lines  and multi-body interactions between them pose additional challenges  \cite{ioffeshlosmantoninelli,IS2017}. }

\bibliography{bib_tightness}
\end{document}